\documentclass[11pt]{article}
\usepackage[a4paper,margin=1in]{geometry}
\usepackage{amsmath,amssymb,amsthm,mathtools}
\usepackage{enumitem}

\newtheorem{theorem}{Theorem}[section]
\newtheorem{lemma}[theorem]{Lemma}
\newtheorem{proposition}[theorem]{Proposition}
\newtheorem{corollary}[theorem]{Corollary}
\theoremstyle{definition}
\newtheorem{definition}[theorem]{Definition}
\theoremstyle{remark}
\newtheorem{remark}[theorem]{Remark}

\usepackage[
    final,
    backref=page,
    hyperindex
]{hyperref}
\usepackage[nameinlink,capitalise]{cleveref}

\newcommand{\C}{\mathbb C}
\newcommand{\Z}{\mathbb Z}
\newcommand{\h}{\mathfrak h}

\title{Classification of   \(U(\mathfrak h)\)-Free Modules over the Heisenberg–Virasoro type Lie superalgebra via Polynomial Shift Operators}
\author{Yan Kong, Haibo Chen\textsuperscript{1}, Yucai Su}
\date{}

\begin{document}
\maketitle

\footnotetext[1]{
Corresponding author: Haibo Chen (hypo1025@jmu.edu.cn).
}

\noindent\textbf{Abstract.}
Let \(\mathcal L\) be the Heisenberg–Virasoro type Lie superalgebra whose even part is the twisted Heisenberg--Virasoro algebra and whose odd part is an abelian ideal. We classify \(\mathbb Z_2\)-graded \(\mathcal L\)-modules that are free of super-rank \(1|1\) over \(U(\mathfrak h)=\mathbb C[L_0,F_0]\). Using polynomial shift operators, we express the defining relations as functional equations for the matrix entries of the generators. Assuming the rank one classification for the even subalgebra, we show that, up to parity reversal, every module with nontrivial odd action belongs to a triangular family \(\Omega^+(\lambda,\beta,d)\), whereas every module with trivial odd action is a direct sum of two rank-one modules for the even subalgebra. We determine the graded isomorphism classes: within the triangular family, the coupling polynomial \(d\) is invariant up to multiplication by a non-zero scalar. We also describe canonical proper submodules and show that every module in the classification is reducible. In particular, we provide a new realization of the Heisenberg–Virasoro type Lie superalgebra using $2\times2$ matrices with entries associated with  polynomial shift-differential operators.
\vspace{3mm}

\noindent\textbf{Keywords.} Heisenberg–Virasoro type Lie superalgebra;
\(U(\mathfrak h)\)-free module; polynomial shift operator.

\vspace{1mm}
\noindent\textbf{Mathematics Subject Classification (2020).} 17B10, 17B65, 17B68.

\tableofcontents

\section{Introduction}
Throughout this paper, we work over the field $\mathbb{C}$ of complex numbers. Unless otherwise stated, all modules are assumed to be nontrivial. For a Lie algebra $\mathfrak{g}$, its universal enveloping algebra is denoted by $U(\mathfrak{g})$. We use $\mathbb{C}^*$, $\mathbb{Z}$, and $\mathbb{N}$ to denote the sets of nonzero complex numbers, integers, and nonnegative integers, respectively. All vector (super)spaces, (super)modules, and Lie (super)algebras are defined over $\mathbb{C}$.

Modules that are free over the universal enveloping algebra of a
degree-zero subalgebra provide a natural source of non-weight
representations. Unlike weight modules, the degree-zero generators
act by multiplication on a polynomial ring, while the remaining
generators act through shifts of the polynomial variables. This
description turns representation-theoretic questions into
polynomial identities. Such modules were studied for
\(\mathfrak{sl}_{n+1}\) by Nilsson \cite{8} and, independently, by
Tan and Zhao \cite{9}. Further developments for Lie algebras and
Lie superalgebras can be found in
\cite{CDL,CDL1,15,CG,DL,HCS,LZ,N2,TZ,XZ,YYX1,YYX3}.

Let \(\mathcal L=\mathcal L_{\bar0}\oplus\mathcal L_{\bar1}\)
be the Lie superalgebra considered in this paper. Its even part
\(\mathcal L_{\bar0}\) is the twisted Heisenberg--Virasoro algebra,
and its odd part is an abelian ideal spanned by the elements \(G_m\),
\(m\in\Z\). We study \(\Z_2\)-graded \(\mathcal L\)-modules that
are free of super-rank \(1|1\) over
\[
U(\h)=\C[L_0,F_0],
\qquad
\h=\C L_0\oplus\C F_0.
\]
Thus each homogeneous component is a free rank-one
\(\C[L_0,F_0]\)-module. Our aim is to determine all possible
actions of \(\mathcal L\) on these modules, together with their
graded isomorphism classes and reducibility properties.

Identifying \(L_0\) and \(F_0\) with multiplication by \(x\)
and \(y\), respectively, gives
\[
M\cong\C[x,y]1_{\bar0}\oplus\C[x,y]1_{\bar1}.
\]
The defining brackets force \(L_m\) and \(F_m\) to shift \(x\)
by \(m\), whereas \(G_m\) shifts \(x\) by \(m\) and \(y\) by
\(-1\). The even generators preserve the two homogeneous
components, while the odd generators exchange them. Consequently,
their actions can be expressed by \(2\times2\) matrices of
polynomial shift operators. The remaining defining relations then
become functional equations for the polynomial coefficients.

Using the known classification of rank-one modules over
\(\mathcal L_{\bar0}\), we solve these equations and obtain the
classification up to parity reversal. If the odd ideal acts
nontrivially, the module belongs to a triangular family
\[
\Omega^+(\lambda,\beta,d),
\qquad
\lambda\in\C^*,\quad
\beta\in\C[y],\quad
0\ne d\in\C[y].
\]
If the odd ideal acts trivially, the module is a split module
\[
\Omega^0(\lambda,\beta;
          \widetilde\lambda,\widetilde\beta),
\]
whose two homogeneous components are independent rank-one
\(\mathcal L_{\bar0}\)-modules. We also determine when two modules
in these families are graded-isomorphic. In particular, for fixed
\(\lambda\) and \(\beta\), the coupling polynomial \(d\) is
determined up to multiplication by a nonzero scalar. Finally,
we identify canonical proper submodules and show that every
module in the classification is reducible.

The paper is organized as follows: Section~2 introduces
\(\mathcal L\), the \(U(\h)\)-free condition, and the shift
identities. Section~3 develops the polynomial shift operator
description and recalls the rank-one classification for
\(\mathcal L_{\bar0}\). Section~4 constructs the module families and
studies their submodules and graded isomorphisms. Finally, we prove the
classification theorem.

\section{The Heisenberg–Virasoro type Lie superalgebra and \texorpdfstring{$U(\h)$-free modules}{U(h)-free modules}}
In this section, we introduce a class of infinite-dimensional Lie superalgebras known as Heisenberg–Virasoro type Lie superalgebras (see \cite{MM,RS}).

\begin{definition}\label{def:algebra}
Let \(\mathcal{L}=\mathcal{L}_{\bar 0}\oplus \mathcal{L}_{\bar 1}\) be a Heisenberg–Virasoro type Lie superalgebra. The even part \(\mathcal{L}_{\bar 0}\) is spanned by
\[
L_m,\quad F_m,\quad C_i\qquad m\in \Z,\ i=1,2,3,
\]
and the odd part \(\mathcal{L}_{\bar 1}\) is spanned by
\[
G_m,\qquad m\in \Z.
\]
The Lie superalgebra structure is given by the following brackets:
\begin{align}
[L_m,L_n]=&(m-n)L_{m+n}+\delta_{m+n,0}\frac{m^3-m}{12}C_1, \label{eq:def1}\\
[L_m,F_n]=&-nF_{m+n}+\delta_{m+n,0}(m^2+m)C_2,\\
[L_m,G_n]=&(m-n)G_{m+n},\\
[F_m,G_n]=&G_{m+n}, \label{eq:def2}\\
[F_m,F_n]=&m\delta_{m+n,0}C_3,\\
[G_m,G_n]_+=&[\mathcal{L},C_i]=0, \label{eq:def3}
\end{align}
where \(m,n\in\mathbb{Z}\), \(i=1,2,3\).
\end{definition}

The even part is isomorphic to the twisted Heisenberg--Virasoro algebra. The set \(\mathcal{C}={\rm span}\{C_i\mid i=1,2,3\}\) constitutes the center of \(\mathcal{L}\). The centerless twisted Heisenberg--Virasoro algebra and the centerless Heisenberg–Virasoro type Lie superalgebra are defined, respectively, by \(\bar{\mathcal{L}}_{\bar0}=\mathcal{L}_{\bar0}/ \mathcal{C}\) and \(\bar{\mathcal{L}}=\mathcal{L}/ \mathcal{C}\). The displayed relations satisfy the super Jacobi identity, and this convention is adopted throughout. In particular, \([G_m,G_m]_+=2G_m^2=0\).

Assume that \(\mathfrak{G}\) is a Lie superalgebra, and let \(P=P_{\bar0}\oplus P_{\bar1}\) be a \(\Z_2\)-graded vector space. An element \(v\in P_{\bar0}\) is called even, whereas an element \(v\in P_{\bar1}\) is called odd. If \(v\) is even, we set \(|v|=\bar 0\); if \(v\) is odd, we set \(|v|=\bar 1\). Elements of \(P_{\bar0}\) and \(P_{\bar1}\) are called homogeneous. Unless otherwise specified, all elements of superalgebras and modules are assumed to be homogeneous.

An \(\mathfrak{G}\)-module is a \(\Z_2\)-graded vector space \(P\) together with a bilinear map \(\mathfrak{G}\times P\rightarrow P\), denoted by \((a,v)\mapsto av\), such that
\[
a(bv)-(-1)^{|a||b|}b(av)=[a,b]v
\quad \mathrm{and}\quad
\mathfrak{G}_{\bar i} P_{\bar j}\subseteq P_{\bar i+\bar j},
\]
where \(\bar i,\bar j\in\Z_2\), \(a,b\in\mathfrak{G}\), \(v\in P\). Consequently, there is a parity-change functor \(\Pi\) from the category of \(\mathfrak{G}\)-modules to itself. That is, for any module \(P=P_{\bar0}\oplus P_{\bar1}\), we have \(\Pi(P_{\bar 0})=P_{\bar 1}\) and \(\Pi(P_{\bar 1})=P_{\bar0}\).

\subsection{\texorpdfstring{\(U(\h)\)-free modules}{U(h)-free modules}}

Let
$
\h=\C L_0\oplus \C F_0.
$
Since \([L_0,F_0]=0\), the universal enveloping algebra is
$
U(\h)\simeq \C[L_0,F_0].
$
Set
$
x=L_0,y=F_0.
$
Then \(U(\h)\simeq \C[x,y]\).

\begin{definition}
An \(\mathcal{L}\)-module \(M\) is called \(U(\h)\)-free of super-rank \(1|1\) if there exist  homogeneous elements
\[
1_{\bar 0}\in M_{\bar 0},\qquad 1_{\bar 1}\in M_{\bar 1}
\]
such that
\[
M=U(\h)1_{\bar 0}\oplus U(\h)1_{\bar 1}.
\]
\end{definition}
Thus, as a vector space,
\[
M\simeq \C[x,y]1_{\bar 0}\oplus \C[x,y]1_{\bar 1}
\simeq \C[x,y]\otimes \C^{1|1}.
\]

We obtain the following shift lemma.

\begin{lemma}\label{lem:shift}
For \(f\in\C[x,y]\), and \(m\in\Z\), we have
\begin{align*}
L_mf(x,y)=&f(x+m,y)L_m,\\
F_mf(x,y)=&f(x+m,y)F_m,\\
G_mf(x,y)=&f(x+m,y-1)G_m.
\end{align*}
\end{lemma}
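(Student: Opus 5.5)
The plan is to read the three identities as equalities in $U(\mathcal L)$ (and hence as equalities of operators on any $\mathcal L$-module, in particular on $M$). I will first check them for the generators $f=x=L_0$ and $f=y=F_0$, and then extend to arbitrary polynomials by multiplicativity.

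\emph{Step 1: the generators.} For any $a\in\mathcal L$ and any even $b\in\h$, the defining relation gives $ab=ba+[a,b]$. No sign appears because $|b|=\bar0$. I apply this with $a\in\{L_m,F_m,G_m\}$ and $b\in\{L_0,F_0\}$ and read off the brackets from Definition~\ref{def:algebra}:
\begin{align*}
[L_m,L_0]&=mL_m, & [L_m,F_0]&=0\cdot F_m+\delta_{m,0}(m^2+m)C_2=0,\\
[F_m,L_0]&=-[L_0,F_m]=mF_m, & [F_m,F_0]&=m\delta_{m,0}C_3=0,\\
[G_m,L_0]&=-[L_0,G_m]=mG_m, & [G_m,F_0]&=-[F_0,G_m]=-G_m.
\end{align*}
The central terms vanish in each case, because either $m^2+m=0$ or $m\delta_{m,0}=0$ at $m=0$. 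Consequently
\begin{align*}
L_mx&=(x+m)L_m, & L_my&=yL_m,\\
F_mx&=(x+m)F_m, & F_my&=yF_m,\\
G_mx&=(x+m)G_m, & G_my&=(y-1)G_m.
\end{align*}

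\emph{Step 2: extension to all $f\in\C[x,y]$.} Fix $X\in\{L_m,F_m,G_m\}$ and let $\sigma$ be the substitution $f(x,y)\mapsto f(x+m,y)$ for $X=L_m,F_m$, and $f(x,y)\mapsto f(x+m,y-1)$ for $X=G_m$. This $\sigma$ is a $\C$-algebra automorphism of $\C[x,y]$. Let $S$ be the set of $f$ with $Xf=\sigma(f)X$. Then $S$ is a linear subspace containing $1$, and it is closed under products:
\[
X(fg)=\sigma(f)Xg=\sigma(f)\sigma(g)X=\sigma(fg)X .
\]
By Step 1, $S$ contains $x$ and $y$, so $S=\C[x,y]$. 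Equivalently, one can induct on the degree of monomials $x^iy^j$.

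There is no real obstacle here. The only points requiring care are the signs in the brackets of the form $[\,\cdot\,,L_0]$ and $[\,\cdot\,,F_0]$, which are reversed relative to the listed $[L_0,\cdot\,]$ and $[F_0,\cdot\,]$; this reversal is what produces $y-1$ rather than $y+1$ for $G_m$. The other point is checking that the central contributions $\delta_{m,0}(m^2+m)C_2$ and $m\delta_{m,0}C_3$ vanish identically, so that $y$ commutes with $L_m$ and $F_m$.
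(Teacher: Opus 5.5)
Your proof is correct and follows the same route as the paper. Like the paper, you derive from the defining brackets the relations $L_mx=(x+m)L_m$, $F_mx=(x+m)F_m$, $G_mx=(x+m)G_m$, $G_my=(y-1)G_m$ and the commutation of $y$ with $L_m,F_m$, then extend to all of $\C[x,y]$ by multiplicativity, which amounts to the paper's induction on monomial degree plus linearity. Your explicit check of the sign reversal in $[G_m,F_0]=-G_m$ and of the vanishing central terms simply fills in details the paper leaves implicit.
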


\begin{proof}
By \eqref{eq:def1}--\eqref{eq:def3}, we obtain
\(L_mL_0=(L_0+m)L_m\), \(F_mL_0=(L_0+m)F_m\),
\(G_mL_0=(L_0+m)G_m\), and \(G_mF_0=(F_0-1)G_m\).
Moreover, \(L_mF_0=F_0L_m\) and \(F_mF_0=F_0F_m\). Induction on the degrees of 
monomial establishes the assertion, and linearity yields the result for \(f\).
\end{proof}

\section{Polynomial shift operators}
Let \(\partial_v=\frac{\partial}{\partial v}\), where \(v\in\{x,y\}\).
For \(m\in \mathbb{Z}\), define the shift operator \(S_m\) and \(T\) on the polynomial algebra \(\mathbb{C}[x,y]\) by
\[
S_m f=f(x+m,y),\qquad T f=f(x,y-1).
\]

\begin{proposition}\label{pro:3.1}
For any \(m\in\mathbb{Z}\), we have \(S_m=e^{m\partial_x}\) and \(T=e^{-\partial_y}\) on \(\mathbb{C}[x,y]\).
\end{proposition}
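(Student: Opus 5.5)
The plan is to interpret $e^{m\partial_x}$ and $e^{-\partial_y}$ as the formal power series $\sum_{k\ge0}\frac{m^k}{k!}\partial_x^k$ and $\sum_{k\ge0}\frac{(-1)^k}{k!}\partial_y^k$. First I will check that these are well-defined operators on $\C[x,y]$. Since $\partial_x$ and $\partial_y$ are locally nilpotent on $\C[x,y]$, only finitely many terms act nontrivially on any given polynomial. For example, if $\deg_x f=n$, then $\partial_x^k f=0$ for all $k>n$. So each series reduces to a finite sum on every polynomial, and no convergence issues arise.

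Both sides of each identity are linear, so it suffices to verify them on the monomial basis $x^ay^b$.

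For $S_m$, the operator $\partial_x$ does not affect the $y$-factor. Hence
\[
e^{m\partial_x}(x^ay^b)=y^b\sum_{k=0}^{a}\frac{m^k}{k!}\,\frac{a!}{(a-k)!}x^{a-k}
=y^b\sum_{k=0}^{a}\binom{a}{k}m^kx^{a-k}=(x+m)^ay^b=S_m(x^ay^b),
\]
where the middle step is the binomial theorem.

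For $T$, the same computation in the variable $y$, with $m$ replaced by $-1$, gives
\[
e^{-\partial_y}(x^ay^b)=x^a\sum_{k=0}^{b}\binom{b}{k}(-1)^ky^{b-k}=x^a(y-1)^b=T(x^ay^b).
\]

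An equivalent route is via Taylor's formula. For a polynomial $f$, the Taylor expansion $f(x+m,y)=\sum_k \frac{m^k}{k!}\partial_x^kf(x,y)$ is exact, because it is a finite polynomial identity. The same holds for the shift $y\mapsto y-1$.

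There is no real obstacle in this argument. The only point requiring care is the first step: the exponential must be read as a finite sum on each polynomial, which is justified by the local nilpotence of $\partial_x$ and $\partial_y$. Once that is in place, the rest is the binomial theorem applied termwise on monomials.
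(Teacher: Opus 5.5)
Your proof is correct and follows essentially the same route as the paper. The paper expands $f(x+m,y)$ by Taylor's formula and uses local nilpotence of $\partial_x$ to make the exponential a finite sum. Your monomial-by-monomial binomial computation is simply an explicit verification of that same finite Taylor identity, and it handles $T=e^{-\partial_y}$ in the same way.
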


\begin{proof}
Let \(m\in\mathbb{Z}\) and \(f\in\mathbb{C}[x,y]\). By definition,
\[
S_m f= f(x + m, y).
\]
Expanding \(f(x+m, y)\) in a Taylor series about \(x\), with \(y\) treated as a constant, gives
\[
f(x+m,y)
=\sum_{k\ge0}\frac{m^k}{k!}
\partial_x^k f,
\]
where only finitely many terms are nonzero.
Since \(\partial_x\) is locally nilpotent on \(\mathbb{C}[x,y]\), we obtain
\[
S_m f = f(x+m, y) = e^{m\partial_x} f.
\]
As this identity holds for every polynomial \(f\), it follows that
\[
S_m = e^{m\partial_x}.
\]
Similarly, we get \(T=e^{-\partial_y}\).
\end{proof}

Let \(U_v\)   denote the multiplication operators, i.e., \(U_v f = v f\), where $v\in\{x,y\}$. Then the commutation relations can be expressed as
\[
S_m \circ U_x = (U_x+m)\circ S_m,\ T \circ U_y = (U_y-1)\circ T.
\]
For \(m\in\mathbb{Z}\), the algebra of polynomial shift operators is defined by
\[
\mathcal{D}_{\rm shift}
=
\left\{
\sum^r_{k=0} g_k(x,y)S_{m_k}T^{i_k}
\;\middle|\; m_k\in\mathbb{Z},\ i_k\in\mathbb{N},\
g_k\in \C[x,y]
\right\}.
\]
A representation of \(\mathcal{L}\) on \(M\) is a Lie superalgebra homomorphism
\[
\rho:\mathcal{L}\longrightarrow {\rm Mat}_{1|1}(\mathcal{D}_{\rm shift}),
\]
where \({\rm Mat}_{1|1}(\mathcal{D}_{\rm shift})\) denotes the algebra of \(2\times 2\) matrices with entries in \(\mathcal{D}_{\rm shift}\), endowed with the \(\Z_2\)-grading induced by the parity of the basis vectors.
We now determine the general form of the operators representing the generators of \(\mathcal{L}\). Since \(L_m\) and \(F_m\) are even, they preserve the \(\mathbb Z_2\)-grading of \(M\).

Equivalently, \(M\) is a left \(U(\mathcal{L})\)-module, with the action given by
\[
X \cdot f = \rho(X)f, \qquad X \in \mathcal{L}, \; f \in M.
\]
Consequently, the category of representations of \(\mathcal L\) is equivalent to the category of left \(U(\mathcal L)\)-modules.

In this paper, we do not directly state that \(M\) is a \(U(\mathcal{L})\)-free module, but rather that \(M\) is a \(U(\mathfrak{h})\)-free module. This is because \(\mathfrak{h}\) is very small, \(U(\mathfrak{h}) \simeq \mathbb{C}[x,y]\), so the free condition forces
\[
M \simeq \mathbb{C}[x,y] \otimes \mathbb{C}^{1|1}.
\]

\subsection{The known results}
In this paper, we always assume that \(\lambda \in \mathbb{C}^*\) and \(\beta(y)=\sum_{i=0}^k\beta_iy^i \in \mathbb{C}[y]\). We now recall a family of non-weight \(\mathcal{L}_{\bar0}\)-modules constructed in \cite{HCS}, which are closely related to the classification of \(U(\mathfrak{h})\)-free modules of rank one.

As a vector space, \(\Psi_{\mathcal{L}_{\bar0}}
(\lambda, \beta)= \mathbb{C}[x,y]\) is the polynomial algebra, with the \(\mathcal{L}_{\bar0}\)-action defined by
\begin{equation}\label{eq2.22}
L_m f = \lambda^m (x+ m\beta(y)) f(x+m,y), \quad
F_m f = \lambda^m yf(x+m,y),\quad C_if=0,
\end{equation}
where \(f \in \mathbb{C}[x,y]\), \(m \in \mathbb{Z}\), and \(i=1,2,3\).

The following results were established in \cite{HCS}.

\begin{theorem}\label{tho2.1}
Let \(\lambda,\lambda^\prime\in\mathbb{C}^*\) and \(\beta(y),\beta^\prime(y)\in\mathbb{C}[y]\).
\begin{itemize}
\item[\rm(1)] \(\Psi_{\mathcal{L}_{\bar0}}
(\lambda, \beta)\) is an \(\mathcal{L}_{\bar0}\)-module;
\item[\rm(2)]
\(\Psi_{\mathcal{L}_{\bar0}}
(\lambda, \beta)\cong\Psi_{\mathcal{L}_{\bar0}}
(\lambda^\prime, \beta^\prime)\) if and only if \(\lambda=\lambda^\prime,\beta(y)=\beta^\prime(y)\);
\item[\rm(3)]
Assume that there exists an \(\mathcal{L}_{\bar0}\)-module \(M^\prime\) that is a
\(U(\mathfrak{h})\)-free module of rank one. Then \(M^\prime\cong \Psi_{\mathcal{L}_{\bar0}}
(\lambda, \beta)\).
\end{itemize}
\end{theorem}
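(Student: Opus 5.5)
We argue the three parts in order; throughout, the shift identities of \cref{lem:shift} restricted to the even generators are the main tool.

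\textbf{Part (1).} We verify directly that the operators in \eqref{eq2.22} satisfy the brackets of $\mathcal L_{\bar0}$ with all $C_i$ acting by zero.

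For $[F_m,F_n]$ we have $F_mF_nf=\lambda^{m+n}y^2f(x+m+n,y)$. This is symmetric in $m,n$, so the bracket vanishes.

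For $[L_m,F_n]$ we compute
\[
L_mF_nf=\lambda^{m+n}(x+m\beta)\,y\,f(x+m+n,y),\qquad F_nL_mf=\lambda^{m+n}\,y\,(x+n+m\beta)f(x+m+n,y).
\]
The difference is $-n\lambda^{m+n}yf(x+m+n,y)=-nF_{m+n}f$. The central term $\delta_{m+n,0}(m^2+m)C_2$ acts by zero, so this is consistent.

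For $[L_m,L_n]$ the difference is
\[
\lambda^{m+n}\big[(x+m\beta)(x+m+n\beta)-(x+n\beta)(x+n+m\beta)\big]f(x+m+n,y).
\]
The bracket expands to $(m-n)x+(m-n)(m+n)\beta$. This equals $(m-n)\lambda^{m+n}(x+(m+n)\beta)f(x+m+n,y)$, which is $(m-n)L_{m+n}f$, as required.

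\textbf{Part (2).} Suppose $\phi$ is an isomorphism. Since $\phi$ commutes with $L_0=x$ and $F_0=y$, it is $\C[x,y]$-linear, so $\phi(f)=fh$ with $h=\phi(1)$. Surjectivity forces $h\in\C^*$.

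Comparing $\phi(F_m1)=F_m\phi(1)$ gives $\lambda^m y=\lambda'^m y$ for all $m$, hence $\lambda=\lambda'$. Comparing the actions of $L_1$ on $1$ then gives $x+\beta=x+\beta'$, hence $\beta=\beta'$. The converse direction is trivial.

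\textbf{Part (3).} Let $M'=\C[x,y]1$. By \cref{lem:shift}, $L_mf=f(x+m,y)\,a_m$ and $F_mf=f(x+m,y)\,b_m$, where $a_m=L_m1$ and $b_m=F_m1$. Each $C_i$ commutes with $U(\h)$ and with every generator, so it acts by multiplication by some polynomial $c_i$.

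The relation $[F_m,F_n]$ gives $b_n(x+m)b_m-b_m(x+n)b_n=m\delta_{m+n,0}c_3$.
\begin{itemize}
\item Taking $m=0$ and $b_0=y$ shows the relation is consistent.
\item Taking $m=-n$ and comparing degrees in $x$ forces $c_3=0$ and $b_n(x)b_{-n}(x+n)=b_{-n}(x)b_n(x-n)$.
\end{itemize}

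The relation $[L_0,F_m]=-mF_m$ is automatic. The relation $[F_m,L_n]$ gives
\[
a_n(x+m)b_m-b_m(x+n)a_n=mb_{m+n}\ \ (\text{up to a central term}).
\]

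The main obstacle is solving this system. The plan is as follows.
\begin{enumerate}
\item Set $n=0$ with $a_0=x$: this gives $mb_m$ on both sides, so it is trivially consistent.
\item Use $m=\pm1$ and degree arguments in $x$ to show $\deg_x b_m=0$ and $\deg_x a_m=1$.
\item Write $a_m=x\,p_m(y)+q_m(y)$. The $[L_m,L_n]$ relation gives a functional equation in $m,n$ of Virasoro type. From it we expect $p_m$ to be independent of $m$ and equal to $1$, and $q_m=\lambda^m m\beta(y)$ after normalizing by a Cauchy-type argument on $\lambda^m=p$-multiplicative factors.
\item Then $b_m=\lambda^m y$ follows from the mixed relation.
\end{enumerate}
The delicate steps are excluding degenerate solutions (for instance $b_m=0$ or $a_m$ of degree $0$) and showing that $c_1=c_2=0$. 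The latter follows by evaluating the $m+n=0$ brackets at polynomial level and comparing constant terms. This is the argument of \cite{HCS}, which we may alternatively cite directly.
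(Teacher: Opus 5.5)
The paper gives no proof of this theorem; all three parts are quoted from \cite{HCS}. Your parts (1) and (2) are correct and self-contained, so there you do more than the paper. Your argument for (2) is the same one the paper later uses in Theorem~\ref{isomorphism-criterion}: $\phi$ is $\mathbb{C}[x,y]$-linear, hence multiplication by a nonzero constant; then $F_1$ gives $\lambda=\lambda'$ and $L_1$ gives $\beta=\beta'$. For (3) you also end up relying on \cite{HCS}, which is the paper's position. However, the outline you give in place of a proof contains steps that are false as written, so it should not be presented as an argument.

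\emph{Step (a), $c_3=0$.} The relation $[F_n,F_{-n}]=nC_3$ alone does not force $c_3=0$ by comparing $x$-degrees. Take $b_1=1$, $b_{-1}=x$: then $b_{-1}(x+1)b_1(x)-b_1(x-1)b_{-1}(x)=1$, so $c_3=1$ is compatible with it. You can only conclude $c_3=0$ once you know $\deg_x b_m=0$.

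\emph{Step (b), the leading coefficients $p_m$.} $p_m$ is not $1$. The $x$-coefficient of $[L_m,L_n]1$ gives $p_{m+n}=p_mp_n$ for $m\neq n$, with $p_0=1$. Hence $p_mp_{-m}=1$, so the $p_m$ are units of $\mathbb{C}[y]$, and $p_m=\lambda^m$. Your normalization gives $a_m=x+\lambda^m m\beta$, which contradicts both the target form and the $[L_m,L_n]$ relation when $\lambda\neq1$.

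\emph{Step (c), the degrees.} Using only $m=\pm1$ does not give $\deg_x a_m=1$. With $d_m=\deg_x a_m$, the bracket $[L_1,L_{-1}]=2L_0$ only yields $d_1+d_{-1}=2$. The coefficient of $x^{d_m+d_n-1}$ in $[L_m,L_n]1$ is $(md_n-nd_m)$ times the product of the leading coefficients. The brackets with $(m,n)=(2,-1)$ and $(-2,1)$ then give $d_2=2d_1-1$ and $d_{-2}=3-2d_1$, hence $d_1=1$. Induction with $[L_m,L_{\pm1}]$ then gives $d_m=1$ for all $m$.

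\emph{A consistent order for the rest.}
\begin{enumerate}[label=\textup{(\roman*)}]
\item Write $a_m=\lambda^m(x+r_m)$. For $m+n\neq0$ one gets $(m-n)r_{m+n}=mr_m-nr_n$, so $r_m=m\beta+\gamma$ for $m\neq0$.
\item Then $[L_m,L_{-m}]1=2mx+2m\gamma$ must equal $2mx+\tfrac{m^3-m}{12}c_1$, which forces $\gamma=c_1=0$.
\item Next, $[L_m,F_{-m}]1=\lambda^m\bigl[(b_{-m}(x+m)-b_{-m}(x))(x+r_m)+mb_{-m}\bigr]$ must equal $my+(m^2+m)c_2(y)$. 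This forces $\deg_x b_{-m}=0$.
\item Then $[L_m,F_n]$ with $n\neq0\neq m+n$ gives $b_k=\lambda^k b(y)$.
\item Now $[L_m,F_{-m}]$ becomes $b=y+(m+1)c_2$ for all $m\neq0$, so $c_2=0$ and $b=y$.
\item Only at this point does $[F_m,F_{-m}]$ yield $c_3=0$.
\end{enumerate}
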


\begin{corollary}
By Theorem \ref{tho2.1}, a realization of the centerless twisted Heisenberg-Virasoro algebra \(\bar{\mathcal L}_{\bar0}\) on \(\mathbb{C}[x,y]\) is given by
\begin{align*}
L_{m}=&\lambda^m (x+ m\beta(y)) e^{m\partial_x},  \quad
F_{m}=\lambda^m ye^{m\partial_x}.
\end{align*}
\end{corollary}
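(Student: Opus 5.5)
The corollary states that the operators
\[
L_m=\lambda^m(x+m\beta(y))e^{m\partial_x},\qquad F_m=\lambda^m y\,e^{m\partial_x}
\]
define a representation of \(\bar{\mathcal L}_{\bar0}\) on \(\C[x,y]\). The plan is to read this off from Theorem~\ref{tho2.1}(1) and Proposition~\ref{pro:3.1}.

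First, Proposition~\ref{pro:3.1} gives \(S_m=e^{m\partial_x}\) on \(\C[x,y]\). Hence the action \eqref{eq2.22} on \(\Psi_{\mathcal L_{\bar0}}(\lambda,\beta)\) is literally the composite of the shift \(e^{m\partial_x}\) followed by multiplication by \(\lambda^m(x+m\beta(y))\), respectively by \(\lambda^m y\). So the displayed formulas are exactly the operators by which \(L_m\) and \(F_m\) act, and they lie in \(\mathcal D_{\rm shift}\).

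Second, Theorem~\ref{tho2.1}(1) says these operators satisfy the brackets of \(\mathcal L_{\bar0}\), with every \(C_i\) acting as \(0\). Therefore the representation \(\mathcal L_{\bar0}\to\operatorname{End}\C[x,y]\) kills the center \(\mathcal C\cap\mathcal L_{\bar0}=\mathrm{span}\{C_1,C_2,C_3\}\). It thus factors through \(\bar{\mathcal L}_{\bar0}=\mathcal L_{\bar0}/\mathcal C\), giving the claimed realization.

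As a sanity check on the sign conventions in the central-free brackets, I would verify the relations directly using \(e^{m\partial_x}g(x)=g(x+m)e^{m\partial_x}\). For example,
\[
[L_m,F_n]=\lambda^{m+n}\bigl[(x+m\beta)y-y(x+n+m\beta)\bigr]e^{(m+n)\partial_x}=-nF_{m+n}.
\]
Similarly, \([F_m,F_n]=0\). For \([L_m,L_n]\), expanding \((x+m\beta)(x+m+n\beta)-(x+n\beta)(x+n+m\beta)\) gives \((m-n)(x+(m+n)\beta)\).

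No step is a real obstacle, since this is an immediate consequence of the cited results. The only point needing care is that the word ``realization'' means the induced map from \(\bar{\mathcal L}_{\bar0}\) is a Lie algebra homomorphism, which the factorization through the quotient provides. If faithfulness were intended, I would argue by comparing leading shift terms: distinct \(m\) give linearly independent \(e^{m\partial_x}\), and \(x+m\beta\) and \(y\) are linearly independent coefficients since \(x\) does not lie in \(\C[y]\). Hence no nonzero combination of the \(L_m\), \(F_m\) acts as zero.
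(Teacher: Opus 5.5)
Your proposal is correct and follows the paper's argument. The paper's one-line justification ``By Theorem~\ref{tho2.1}'' means exactly what you spell out: Theorem~\ref{tho2.1}(1) gives an $\mathcal L_{\bar0}$-module on which every $C_i$ acts by zero, so the action factors through $\bar{\mathcal L}_{\bar0}$, and Proposition~\ref{pro:3.1} rewrites $S_m$ as $e^{m\partial_x}$. Your direct bracket computations and your faithfulness remark are correct additions; the bracket computations also make the argument independent of the cited result.
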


\section{Non-weight modules}
In this section, we introduce a class of non-weight representations of the Heisenberg–Virasoro type Lie superalgebra.

The following result presents the polynomial shift operator formulation.

\begin{lemma}\label{lem:4.2}
For any \(g,h\in\mathbb{C}[x,y]\) and \(m,n\in\mathbb{Z}\), \(i,j\in\mathbb{N}\), the following operator identity holds on \(\mathbb{C}[x,y]\):
\[
(gS_mT^i)(hS_nT^j)=gh(x+m,y-i)S_{m+n}T^{i+j}.
\]
\end{lemma}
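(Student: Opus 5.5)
The plan is to apply both sides of the identity to an arbitrary polynomial \(f\in\C[x,y]\). Since operators on \(\C[x,y]\) are equal exactly when they agree on every polynomial, this settles the claim. Throughout, \(g\) and \(h\) are read as the multiplication operators \(U_g\) and \(U_h\). The shift operators act by the substitutions \(S_m f=f(x+m,y)\) and \(T f=f(x,y-1)\). Iterating the latter gives \(T^i f=f(x,y-i)\) for every \(i\in\mathbb N\).

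The first step records two elementary facts. First, \(S_m\) and \(T\) commute, because both are substitution operators acting on different variables; hence \(S_mT^if=f(x+m,y-i)\). Second, substitution operators are ring endomorphisms of \(\C[x,y]\). From this we get the generalization of the relations \(S_m\circ U_x=(U_x+m)\circ S_m\) and \(T\circ U_y=(U_y-1)\circ T\) stated before the lemma:
\[
S_mT^i\circ U_h = U_{h(x+m,y-i)}\circ S_mT^i .
\]
This is checked directly, since \(S_mT^i(hf)=h(x+m,y-i)\,f(x+m,y-i)\). One could instead argue by induction on monomials using the two given commutation relations, but the direct substitution argument is cleaner.

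The second step composes the operators. We have
\[
(gS_mT^i)(hS_nT^j)f=g\,S_mT^i\bigl(h\,f(x+n,y-j)\bigr).
\]
Applying the twisted commutation from the first step, this equals
\[
g\,h(x+m,y-i)\,f(x+m+n,y-i-j).
\]
Finally we identify \(f(x+m+n,y-i-j)\) with \(S_{m+n}T^{i+j}f\). This uses \(S_mS_n=S_{m+n}\), \(T^iT^j=T^{i+j}\) and the commutation of \(S\) with \(T\). Alternatively, it follows from Proposition \ref{pro:3.1}, since \(e^{m\partial_x}e^{n\partial_x}=e^{(m+n)\partial_x}\) and \(\partial_x\) and \(\partial_y\) commute.

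No step is a genuine obstacle. The only care needed is notational: in \(gh(x+m,y-i)\), only \(h\) is shifted while \(g\) is not, and the order of composing substitutions must be tracked so that the shifts add rather than appear in reverse.
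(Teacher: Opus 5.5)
Your proposal is correct and follows the paper's proof: both apply the two sides to an arbitrary \(f\in\C[x,y]\) and use that \(S_mT^i\) acts by substitution, so \(S_mT^i\bigl(h\,f(x+n,y-j)\bigr)=h(x+m,y-i)\,f(x+m+n,y-i-j)\). Your extra remarks, that \(S_m\) and \(T\) commute and that substitutions are ring endomorphisms, only spell out what the paper's one-line computation uses implicitly.
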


\begin{proof}
Recall that \(S_m\) acts on the \(x\)-variable, namely
\[
S_m f=f(x+m,y).
\]
Let \(f\in\mathbb{C}[x,y]\) be arbitrary. Applying the left-hand side to \(f\), we compute
\begin{align*}
\bigl((gS_mT^i)(hS_nT^j)\bigr)f
=&(gS_mT^i)\bigl(hf(x+n,y-j)\bigr)\\
=&g\bigl(h(x+m,y-i)f(x+m+n,y-i-j)\bigr)\\
=&gh(x+m,y-i)f(x+m+n,y-i-j)\\
=&\bigl(gh(x+m,y-i)S_{m+n}T^{i+j}\bigr)f.
\end{align*}
Since \(f\in\mathbb{C}[x,y]\) was arbitrary, the claimed operator identity follows.
\end{proof}

Lemma \ref{lem:4.2} is the basic calculation behind all the functional equations below.

\begin{proposition}\label{prop:construction}
For any \(m,n\in\mathbb{Z}\), the following two maps  \(\rho\) define Lie superalgebra homomorphisms, thereby yielding \(\mathbb{Z}_2\)-graded representations of the Heisenberg–Virasoro type Lie superalgebra.
\begin{itemize}
\item{\rm(1)}
The explicit Lie superalgebra homomorphism is given by:
\begin{align*}
\rho(L_m)
=&\begin{pmatrix}
\lambda^m(x+m\beta(y))S_m&0\\
0&\lambda^m(x+m(\beta(y-1)+1))S_m
\end{pmatrix}
,\label{eq:matrixL}\\
\rho(F_m)
=&\begin{pmatrix}
\lambda^m yS_m&0\\
0&\lambda^m yS_m
\end{pmatrix},\\
\rho(G_m)
=&\begin{pmatrix}
0&0\\
\lambda^m d(y)S_mT&0
\end{pmatrix},\\
\rho(C_i)=&\begin{pmatrix}
0&0\\0&0
\end{pmatrix},
\end{align*}
where \(i=1,2,3\), \(\lambda\in\mathbb{C}^*\), \(\beta(y)\in\mathbb{C}[y]\), and \(d(y)\in\mathbb{C}[y]\setminus\{0\}\).  

\item{\rm(2)}
The explicit Lie superalgebra homomorphism is given by:
\begin{align*}
\rho(L_m)
=&\begin{pmatrix}
\lambda^m\bigl(x+m\beta(y)\bigr)S_m&0\\
0&\widetilde{\lambda}^m\bigl(x+m\widetilde{\beta}(y)\bigr)S_m
\end{pmatrix},\\
\rho(F_m)
=&\begin{pmatrix}
\lambda^m yS_m&0\\
0&\widetilde{\lambda}^m yS_m
\end{pmatrix},\\
\rho(G_m)=&
\rho(C_i)=\begin{pmatrix}
0&0\\0&0
\end{pmatrix},
\end{align*}
where \(i=1,2,3\), \(\lambda,\widetilde{\lambda}\in\mathbb{C}^*\), and \(\beta(y),\widetilde{\beta}(y)\in\mathbb{C}[y]\). 
\end{itemize}
\end{proposition}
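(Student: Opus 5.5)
Since $\rho$ is given on a basis and is linear, it suffices to check that the images satisfy every defining bracket of Definition~\ref{def:algebra}, with supercommutators computed in ${\rm Mat}_{1|1}(\mathcal D_{\rm shift})$. All products of entries will be computed with Lemma~\ref{lem:4.2}. The images of the $C_i$ are zero, so each check reduces to verifying the centerless relations together with the vanishing of the central terms. Here is the main simplification. In both (1) and (2), the even generators act diagonally, and each diagonal entry has the shape of the rank-one action \eqref{eq2.22}. Thus the $(1,1)$ entries form $\Psi_{\mathcal L_{\bar0}}(\lambda,\beta)$. The $(2,2)$ entries form $\Psi_{\mathcal L_{\bar0}}(\lambda,\beta(y-1)+1)$ in case (1) and $\Psi_{\mathcal L_{\bar0}}(\widetilde\lambda,\widetilde\beta)$ in case (2). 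By Theorem~\ref{tho2.1}(1), each of these is an $\mathcal L_{\bar0}$-module on which the $C_i$ act by zero. Hence all relations among $L_m,F_m,C_i$ hold entrywise, and the matrix relations follow because products of diagonal matrices are diagonal.

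For (2), nothing else is needed: $\rho(G_m)=0$, so the relations $[L_m,G_n]=(m-n)G_{m+n}$, $[F_m,G_n]=G_{m+n}$ and $[G_m,G_n]_+=0$ read $0=0$.

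For (1), three checks on the odd part remain, and each reduces to an identity between scalar shift operators in the lower-left entry.
\begin{itemize}
\item $[G_m,G_n]_+=0$. This holds because $\rho(G_m)\rho(G_n)$ is a product of two strictly lower triangular matrices and hence vanishes.
\item $[F_m,G_n]=G_{m+n}$. The lower-left entry of $\rho(F_m)\rho(G_n)-\rho(G_n)\rho(F_m)$ is
\[
\lambda^{m+n}\bigl(y\,d(y)-d(y)(y-1)\bigr)S_{m+n}T=\lambda^{m+n}d(y)S_{m+n}T,
\]
by Lemma~\ref{lem:4.2}. The $T$-shift in the second term lowers $y$ to $y-1$, which produces the difference $y-(y-1)=1$.
\item $[L_m,G_n]=(m-n)G_{m+n}$. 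Write $\gamma(y)=\beta(y-1)+1$. The lower-left entry of the commutator is
\[
\lambda^{m+n}\bigl[(x+m\gamma(y))d(y)-d(y)\bigl(x+n+n\beta(y-1)\bigr)\bigr]S_{m+n}T .
\]
Here the first term comes from $\rho(L_m)_{22}\rho(G_n)_{21}$, where the $d(y)$ is unshifted because it depends only on $y$. The second term comes from $\rho(G_n)_{21}\rho(L_m)_{11}$, where $x+m\beta(y)$ is shifted to $x+n+m\beta(y-1)$. So the $\beta$ coefficient in the second term is actually $m$, not $n$. The bracket then simplifies to $d(y)\bigl(m\gamma(y)-n-m\beta(y-1)\bigr)=d(y)(m-n)$, which is exactly $(m-n)\lambda^{m+n}d(y)S_{m+n}T$.
\end{itemize}
This $[L_m,G_n]$ identity is the only step that uses the specific choice $\gamma=\beta(y-1)+1$ in the $(2,2)$ entry, so it is the crux. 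I would carry it out carefully with Lemma~\ref{lem:4.2}, keeping track of which factor receives the $x$- and $y$-shifts.

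Finally, in both cases $\rho(L_m)$ and $\rho(F_m)$ are diagonal (even) and $\rho(G_m)$ is off-diagonal (odd), so $\rho$ preserves parity. Hence $\rho$ is a homomorphism of Lie superalgebras, and the modules it defines are $\Z_2$-graded.
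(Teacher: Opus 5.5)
Your proof is correct. On the odd part it matches the paper's argument. Both compute the lower-left entries of $[\rho(F_m),\rho(G_n)]$ and $[\rho(L_m),\rho(G_n)]$ with Lemma~\ref{lem:4.2}, and both get $\rho(G_m)\rho(G_n)=0$ from triangularity.

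The difference is in the even relations. The paper checks $[\rho(L_m),\rho(L_n)]=(m-n)\rho(L_{m+n})$ and $[\rho(L_m),\rho(F_n)]=-n\rho(F_{m+n})$ by hand. It uses $P_m(P_n+m)-P_n(P_m+n)=mP_m-nP_n=(m-n)P_{m+n}$, and likewise for $Q$. It covers case (2) only as ``a similar computation''. You instead read each diagonal entry as the action on some $\Psi_{\mathcal L_{\bar0}}(\lambda',\beta')$ and invoke Theorem~\ref{tho2.1}(1), noting that brackets of diagonal matrices are computed entrywise.

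Your route is shorter and settles case (2) completely. It also makes clear that $[L_m,G_n]$ is the only relation that forces the twist $\beta(y-1)+1$. The paper's route is self-contained rather than relying on the quoted rank-one result, though that result amounts to the same two-line identity.

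One cosmetic fix: the displayed lower-left entry in your $[L_m,G_n]$ bullet contains $n\beta(y-1)$, which should be $m\beta(y-1)$. You catch this in the next sentence, and your simplification uses the correct term, but the display itself should be corrected.
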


\begin{proof}
{\rm(1)}
For simplicity, we denote \(P_m=x+m\beta(y)\) and \(Q_m=x+m(\beta(y-1)+1)\). It is easy to see that
\[
mX_m-nX_n=(m-n)X_{m+n},
\]
where \(X\in\{P,Q\}\).
Then, by Lemma \ref{lem:4.2}, it suffices to verify the defining commutation relations of the Heisenberg–Virasoro type Lie superalgebra. Since the even elements do not change parity, we need only consider the bracket between \(\rho(L_m)\) and \(\rho(L_n)\) in matrix form:
\begin{align*}
[\rho(L_m),\rho(L_n)]
=& \begin{pmatrix}
\lambda^mP_mS_m & 0\\
0 & \lambda^mQ_mS_m
\end{pmatrix}
\begin{pmatrix}
\lambda^nP_nS_n & 0\\
0 & \lambda^nQ_nS_n
\end{pmatrix} \\
& -
\begin{pmatrix}
\lambda^nP_nS_n & 0\\
0 & \lambda^nQ_nS_n
\end{pmatrix}
\begin{pmatrix}
\lambda^mP_mS_m & 0\\
0 & \lambda^mQ_mS_m
\end{pmatrix} \\
=& \lambda^{m+n}\begin{pmatrix}
P_m(P_n+m)S_{m+n} & 0\\
0 &Q_m(Q_n+m)S_{m+n}
\end{pmatrix} \\
& -\lambda^{m+n}\begin{pmatrix}
P_n(P_m+n)S_{m+n} & 0\\
0 &Q_n(Q_m+n)S_{m+n}
\end{pmatrix}  \\
=& (m-n)\lambda^{m+n}\begin{pmatrix}
P_{m+n}S_{m+n} & 0\\
0 &Q_{m+n}S_{m+n}
\end{pmatrix} \\
=& (m-n)\rho(L_{m+n}).
\end{align*}

Next, we compute the bracket between \(\rho(L_m)\) and \(\rho(F_n)\):
\begin{align*}
[\rho(L_m),\rho(F_n)]
=& \begin{pmatrix}
\lambda^{m}P_mS_m & 0\\
0 & \lambda^{m}Q_mS_m
\end{pmatrix}
\begin{pmatrix}
\lambda^{n}yS_n & 0\\
0 & \lambda^{n}yS_n
\end{pmatrix} \\
&  -
\begin{pmatrix}
\lambda^{n}yS_n & 0\\
0 & \lambda^{n}yS_n
\end{pmatrix}
\begin{pmatrix}
\lambda^{m}P_mS_m & 0\\
0 & \lambda^{m}Q_mS_m
\end{pmatrix} \\
=&-n\lambda^{m+n}\begin{pmatrix}
yS_{m+n} & 0\\
0 &yS_{m+n}
\end{pmatrix} \\
=& -n\rho(F_{m+n}).
\end{align*}

We now show that the bracket between \(\rho(L_m)\) and \(\rho(G_n)\) is given by
\begin{align*}
[\rho(L_m),\rho(G_n)] 
=&\rho(L_m)\rho(G_n)
-\rho(G_n)\rho(L_m) \\
=&\rho(L_m)\begin{pmatrix}
0&0\\
\lambda^n d(y)S_nT&0
\end{pmatrix}
-\rho(G_n)\begin{pmatrix}
\lambda^mP_mS_m & 0\\
0 & \lambda^mQ_mS_m
\end{pmatrix}\\
=&\begin{pmatrix}
\lambda^{m}P_mS_m & 0\\
0 & \lambda^{m}Q_mS_m
\end{pmatrix}\begin{pmatrix}
0&0\\
\lambda^n d(y)S_nT&0
\end{pmatrix}
\\&-\begin{pmatrix}
0&0\\
\lambda^n d(y)S_nT&0
\end{pmatrix}\begin{pmatrix}
\lambda^mP_mS_m & 0\\
0 & \lambda^mQ_mS_m
\end{pmatrix}\\
=&(m-n)\begin{pmatrix}
0&0\\
\lambda^{m+n} d(y)S_{m+n}T&0
\end{pmatrix}\\
=& (m-n)\rho(G_{m+n}).
\end{align*}
The relations \([\rho(F_m),\rho(G_n)]=\rho(G_{m+n})\), \([\rho(F_m),\rho(F_n)]=0\), and \([\rho(G_m),\rho(G_n)]=0\) follow immediately. All remaining cases involving \(C_i\) follow trivially, since \(\rho(C_i)\) is the zero matrix.

{\rm(2)} By a similar computation as in {\rm(1)}, we obtain the result. This completes the proof.
\end{proof}
\begin{remark}
Propositions \ref{pro:3.1} and \ref{prop:construction} provide a new realization of the centerless Heisenberg–Virasoro type Lie superalgebra $\bar{\mathcal{L}}$ by $2\times2$ matrices whose entries are polynomial shift-differential operators, as follows:
\begin{align*}
L_m
=&\begin{pmatrix}
\lambda^m(x+m\beta(y))e^{m\partial_x}&0\\
0&\lambda^m(x+m(\beta(y-1)+1))e^{m\partial_x}
\end{pmatrix}
,\\
F_m
=&\begin{pmatrix}
\lambda^m ye^{m\partial_x}&0\\
0&\lambda^m ye^{m\partial_x}
\end{pmatrix},\\
G_m
=&\begin{pmatrix}
0&0\\
\lambda^m d(y)e^{m\partial_x-\partial_y}&0
\end{pmatrix},
\end{align*}
where  \(\lambda\in\mathbb{C}^*\), \(\beta(y)\in\mathbb{C}[y]\), and \(d(y)\in\mathbb{C}[y]\setminus\{0\}\).  
\end{remark}
Let \(M=M_{\bar 0}\oplus M_{\bar1}\) be an \(\mathcal{L}\)-module that is free of super-rank \(1|1\) as a \(U(\mathfrak{h})\)-module. Thus, with respect to the decomposition \(M=M_{\bar 0}\oplus M_{\bar 1}\), each element of \(\operatorname{End}_{\mathbb C}(M)\) can, by Proposition \ref{prop:construction}, be represented by a \(2\times2\) matrix of operators on \(\mathbb C[x,y]\). 
With respect to the ordered $U(\mathfrak h)$-basis
$(1_{\bar 0},1_{\bar 1})$, we identify
\[
f1_{\bar 0}+g1_{\bar 1}
\quad\text{with}\quad
\binom{f}{g},
\qquad f,g\in\mathbb C[x,y].
\]
Thus, the entries of the following matrices act on the polynomial coordinates $f$ and $g$, while the matrix positions determine the
corresponding homogeneous components.
For any $f,g\in\mathbb{C}[x,y]$ and $m\in\mathbb{Z}$, Proposition \ref{prop:construction} (1) yields the following nontrivial \(\mathcal{L}\)-module actions:
\[
 L_m\cdot\binom{f}{g}
=\rho(L_m)\binom{f}{g}
=
\binom{
\lambda^m(x+m\beta(y))f(x+m,y)
}{
\lambda^m\bigl(x+m(\beta(y-1)+1)\bigr)g(x+m,y)
},
\]
\[ F_m\cdot\binom{f}{g}
=
\rho(F_m)\binom{f}{g}
=
\binom{
\lambda^m y f(x+m,y)
}{
\lambda^m y g(x+m,y)
},
\]
and
\[ G_m\cdot\binom{f}{g}
=
\rho(G_m)\binom{f}{g}
=
\binom{
0
}{
\lambda^m d(y)f(x+m,y-1)
}.
\]
Equivalently, in terms of the homogeneous generators,
\[
\begin{aligned}
L_m\cdot(f1_{\bar0})
 &=\lambda^m(x+m\beta(y))f(x+m,y)1_{\bar0},\\
L_m\cdot(g1_{\bar1})
 &=\lambda^m\bigl(x+m(\beta(y-1)+1)\bigr)
   g(x+m,y)1_{\bar1},\\
F_m\cdot(f1_{\bar0})
 &=\lambda^m y f(x+m,y)1_{\bar0},\\
F_m\cdot(g1_{\bar1})
 &=\lambda^m y g(x+m,y)1_{\bar1},\\
G_m\cdot(f1_{\bar0})
 &=\lambda^m d(y)f(x+m,y-1)1_{\bar1},\\
G_m\cdot(g1_{\bar1})
 &=C_i\cdot(f1_{\bar0})
  =
C_i\cdot(g1_{\bar1})=0, 
\end{aligned}
\]
where \(i=1,2,3\), \(\lambda,\widetilde{\lambda}\in\mathbb{C}^*\),   \(\beta(y),\widetilde{\beta}(y)\in\mathbb{C}[y]\) and \(d(y)\in\mathbb{C}[y]\setminus\{0\}\).
 This $\mathcal{L}$-module is denoted by \(\Omega^+(\lambda,\beta,d)\).

Similarly, for any $f,g\in\mathbb{C}[x,y]$ and $m\in\mathbb{Z}$, Proposition \ref{prop:construction} (2) yields the following $\mathcal{L}$-module actions:
\[
\begin{aligned}
L_m\cdot(f1_{\bar0})
 &=\lambda^m(x+m\beta(y))f(x+m,y)1_{\bar0},\\
L_m\cdot(g1_{\bar1})
 &=\widetilde{\lambda}^m\bigl(x+m\widetilde{\beta}(y)\bigr)
   g(x+m,y)1_{\bar1},\\
F_m\cdot(f1_{\bar0})
 &=\lambda^m y f(x+m,y)1_{\bar0},\\
F_m\cdot(g1_{\bar1})
 &=\widetilde{\lambda}^m y g(x+m,y)1_{\bar1},\\
G_m\cdot(f1_{\bar0})
 &=
G_m\cdot(g1_{\bar1})
  =C_i\cdot(f1_{\bar0})
  =
C_i\cdot(g1_{\bar1})=0,
\end{aligned}
\]
where \(i=1,2,3\), \(\lambda,\widetilde{\lambda}\in\mathbb{C}^*\), and \(\beta(y),\widetilde{\beta}(y)\in\mathbb{C}[y]\). This $\mathcal{L}$-module is denoted by \(\Omega^0(\lambda,\beta;           \widetilde\lambda,\widetilde\beta)\).

\subsection{Submodules and isomorphism classes}

\begin{proposition} \label{canonical-submodules}
In \(\Omega^+(\lambda,\beta,d)\), the spaces
\[
N=\C[x,y]1_{\bar1},\qquad N_d=d(y)\C[x,y]1_{\bar1}
\]
are nonzero proper graded \(\mathcal{L}\)-submodules, with \(N_d\subseteq N\). Moreover,
\[
\Omega^+(\lambda,\beta,d)/N\cong \Psi_{\mathcal{L}_{\bar0}}
(\lambda,  \beta),\qquad
N\cong \Psi_{\mathcal{L}_{\bar0}}
(\lambda, \beta^\sharp),
\quad \beta^\sharp(y)=\beta(y-1)+1.
\]
For \(\Omega^0(\lambda,\beta;           \widetilde\lambda,\widetilde\beta)\), analogous results hold with the parities interchanged.
\end{proposition}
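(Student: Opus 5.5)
The plan is to verify everything directly from the explicit action formulas of \(\Omega^+(\lambda,\beta,d)\) listed after Proposition~\ref{prop:construction}.

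\emph{Submodule property.} Take \(N=\C[x,y]1_{\bar1}\). By the formulas, \(L_m\) and \(F_m\) map \(g1_{\bar1}\) to multiples of \(1_{\bar1}\), namely
\[
\lambda^m\bigl(x+m\beta^\sharp(y)\bigr)g(x+m,y)1_{\bar1}
\quad\text{and}\quad
\lambda^m y\,g(x+m,y)1_{\bar1}.
\]
Moreover \(G_m\) and \(C_i\) kill \(1_{\bar1}\), so \(N\) is stable. For \(N_d\), write \(g=d(y)h\). Then \(g(x+m,y)=d(y)h(x+m,y)\), because \(d\) depends only on \(y\) and \(S_m\) shifts only \(x\). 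So the \(L_m\) and \(F_m\) images stay in \(d(y)\C[x,y]1_{\bar1}\), and the odd and central elements act by zero.

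Both spaces are graded, since they lie in \(M_{\bar1}\), and both are nonzero, since \(d\neq0\). They are proper because \(1_{\bar0}\notin N\). Clearly \(N_d\subseteq N\). If one also wants \(N_d\subsetneq N\) when \(\deg d>0\), note that \(1_{\bar1}\notin N_d\). Note the point that \(N_d\) is a submodule even when \(d\) is constant; in that case \(N_d=N\).

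\emph{Quotient and submodule identification.} The quotient \(\Omega^+/N\) has basis image \(\C[x,y]\bar1_{\bar0}\). On it the \(G_m\) act by zero, since they land in \(N\). The \(L_m\) and \(F_m\) act by \(\lambda^m(x+m\beta(y))f(x+m,y)\) and \(\lambda^m y f(x+m,y)\), and the \(C_i\) act by zero. The map \(f\bar1_{\bar0}\mapsto f\) is therefore an isomorphism onto \(\Psi_{\mathcal L_{\bar0}}(\lambda,\beta)\), regarded as an \(\mathcal L\)-module with trivial odd action. Similarly, \(g1_{\bar1}\mapsto g\) identifies \(N\) with \(\Psi_{\mathcal L_{\bar0}}(\lambda,\beta^\sharp)\), where \(\beta^\sharp(y)=\beta(y-1)+1\). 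This matches \eqref{eq2.22} term by term.

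\emph{The split family.} For \(\Omega^0(\lambda,\beta;\widetilde\lambda,\widetilde\beta)\), all \(G_m\) act by zero and \(L_m\), \(F_m\) preserve each homogeneous component. Hence both \(\C[x,y]1_{\bar0}\) and \(\C[x,y]1_{\bar1}\) are submodules. They are isomorphic to \(\Psi_{\mathcal L_{\bar0}}(\lambda,\beta)\) and \(\Psi_{\mathcal L_{\bar0}}(\widetilde\lambda,\widetilde\beta)\) respectively, and each is the quotient by the other. This gives the analogous statement with parities interchanged.

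There is no real obstacle here. The only point needing care is that every shift occurring on \(M_{\bar1}\) is an \(x\)-shift, so the factor \(d(y)\) is preserved. The \(y\)-shift \(T\) appears only in \(G_m\), and \(G_m\) vanishes on \(M_{\bar1}\).
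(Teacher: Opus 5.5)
Your proposal is correct and follows the same route as the paper: a direct verification from the explicit action formulas. As in the paper, the even generators shift only $x$, so they preserve each homogeneous summand and the factor $d(y)$, while each $G_m$ kills $N$ and sends the even summand into $N_d$; the identifications of $\Omega^+(\lambda,\beta,d)/N$ and $N$ with $\Psi_{\mathcal{L}_{\bar0}}(\lambda,\beta)$ and $\Psi_{\mathcal{L}_{\bar0}}(\lambda,\beta^\sharp)$ are then read off term by term, and your reading of the $\Omega^0$ clause is a reasonable interpretation of the paper's appeal to parity reversal.
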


\begin{proof}
The odd part of \(\Omega^+(\lambda,\beta,d)\) preserve each homogeneous summand, as well as \(N_d\), since they
do not shift \(y\). Each \(G_m\) annihilates \(N\) and maps the odd summand into \(N_d\). The stated identifications follow from the explicit action. Parity
reversal yields the assertion for \(\Omega^0(\lambda,\beta;           \widetilde\lambda,\widetilde\beta)\).
\end{proof}

\begin{theorem}\label{irreducibility}
No super-rank \(1|1\) \(U(\h)\)-free module in Proposition~\ref{prop:construction} is irreducible. Specifically, every \(\Omega^+(\lambda,\beta,d)\) is reducible for all admissible parameters, and every split module \(\Omega^0(\lambda,\beta;\widetilde{\lambda},\widetilde{\beta})\) is reducible. Consequently, irreducibility cannot be obtained by imposing additional conditions on \(\lambda\), \(\beta\), or \(d\).
\end{theorem}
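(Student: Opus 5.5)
Reducibility means exhibiting a nonzero proper graded submodule in each case; the plan is to do this directly from the explicit actions.

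For \(\Omega^+(\lambda,\beta,d)\), I would invoke Proposition~\ref{canonical-submodules}. I would still recheck from the formulas that \(N=\C[x,y]1_{\bar1}\) is a submodule. The even generators \(L_m\) and \(F_m\) act on \(g1_{\bar1}\) by \(\lambda^m(x+m(\beta(y-1)+1))g(x+m,y)1_{\bar1}\) and \(\lambda^m y g(x+m,y)1_{\bar1}\), so they stay in \(N\). Each \(G_m\) annihilates \(N\), and each \(C_i\) acts by zero. Hence \(N\) is \(\mathcal L\)-stable and graded. It is nonzero, since it contains \(1_{\bar1}\). It is proper, since \(1_{\bar0}\notin N\) by freeness of \(M=U(\h)1_{\bar0}\oplus U(\h)1_{\bar1}\). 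Because none of this uses any condition on \(\lambda\), \(\beta\) or \(d\), the conclusion holds for all admissible parameters.

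For the split module \(\Omega^0(\lambda,\beta;\widetilde\lambda,\widetilde\beta)\), all \(G_m\) and \(C_i\) act by zero. The even generators preserve each summand \(\C[x,y]1_{\bar0}\) and \(\C[x,y]1_{\bar1}\). Hence both summands are submodules, each nonzero and proper, and the module is their direct sum. In fact each summand is isomorphic to some \(\Psi_{\mathcal L_{\bar0}}(\cdot,\cdot)\), with \(\mathcal L_{\bar1}\) acting trivially.

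As a further remark, each homogeneous summand is itself reducible as an \(\mathcal L_{\bar0}\)-module, so even finer submodules exist. In \(\Psi_{\mathcal L_{\bar0}}(\lambda,\beta)\), the subspace \(y\C[x,y]\) is stable because the actions never shift \(y\), and it is proper. This is not needed, since a single proper nonzero submodule already gives reducibility.

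The final sentence of the theorem, that no extra conditions on the parameters give irreducibility, follows because the submodules above exist uniformly in \(\lambda,\beta,d\). I expect no real obstacle. The only care needed is properness, which rests on the directness of the sum \(U(\h)1_{\bar0}\oplus U(\h)1_{\bar1}\), and the observation that \(\mathcal L_{\bar1}\) maps \(M_{\bar0}\) into \(M_{\bar1}\) but never back in \(\Omega^+\).
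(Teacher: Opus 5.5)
Your proposal is correct and follows the same route as the paper. For \(\Omega^+(\lambda,\beta,d)\) the paper cites Proposition~\ref{canonical-submodules}, whose submodule \(N=\C[x,y]1_{\bar1}\) you use; for the split module it notes that each homogeneous summand is a nonzero proper submodule. Your direct recheck that \(N\) is stable, and your side remark that \(y\C[x,y]\) is stable in \(\Psi_{\mathcal L_{\bar0}}(\lambda,\beta)\), are both correct but not needed.
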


\begin{proof}
For \(\Omega^+(\lambda,\beta,d)\), the result follows from Proposition~\ref{canonical-submodules}. In a split module, each homogeneous summand is a nonzero proper submodule. These cases exhaust the classification.
\end{proof}

\begin{theorem}\label{isomorphism-criterion}
Let all coupling polynomials below be nonzero.
\begin{enumerate}[label=\textup{(\roman*)}]
\item \(\Omega^+(\lambda,\beta,d)\cong\Omega^+(\mu,\gamma,c)\) if and only if
\[
\lambda=\mu,\qquad \beta=\gamma,\qquad d=a c
\quad\text{for some }a\in\C^*.
\]

\item No \(\Omega^+(\lambda,\beta,d)\) is  isomorphic to an \(\Omega^0(\lambda,\beta;\widetilde{\lambda},\widetilde{\beta})\).

\item Two split modules are  isomorphic if and only if their ordered
parameters agree:
\[
\Omega^0(\lambda,\beta;\widetilde{\lambda},\widetilde{\beta})
\cong\Omega^0(\mu,\gamma;\widetilde{\mu},\widetilde{\gamma})
\]
if and only if \(\lambda=\mu\), \(\widetilde{\lambda}=\widetilde{\mu}\),
\(\beta=\gamma\) and \(\widetilde{\beta}=\widetilde{\gamma}\).
\end{enumerate}
Consequently, for fixed \(\lambda,\beta\), the coupling invariant is the
projective class \([d]\in(\C[y]\setminus\{0\})/\C^*\).
\end{theorem}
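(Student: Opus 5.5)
The plan is to exploit the fact that any graded (i.e.\ even) $\mathcal L$-module isomorphism commutes with the actions of $L_0=x$ and $F_0=y$. Such a map is therefore $\C[x,y]$-linear on each homogeneous component, and this pins it down almost completely. Concretely, let $\varphi:\Omega^+(\lambda,\beta,d)\to\Omega^+(\mu,\gamma,c)$ be a graded isomorphism. Since $\varphi$ preserves parity, we can write
\[
\varphi(1_{\bar0})=p\,1_{\bar0}',\qquad \varphi(1_{\bar1})=q\,1_{\bar1}'
\]
for some $p,q\in\C[x,y]$. Commuting with $L_0$ and $F_0$ then gives $\varphi(f1_{\bar0})=fp\,1_{\bar0}'$ and $\varphi(g1_{\bar1})=gq\,1_{\bar1}'$.

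The first key step is to show that $p$ and $q$ are nonzero constants. Surjectivity of $\varphi$ on the even component means $p\,\C[x,y]=\C[x,y]$, so $p$ is a unit of $\C[x,y]$, hence $p\in\C^*$; the same argument gives $q\in\C^*$. This step is where the whole argument gets its rigidity, and I expect it to be the only point needing care. In particular, one must use that $\varphi$ intertwines $L_0,F_0$ and is not merely a vector space isomorphism, and that graded isomorphisms are even maps. Parity-reversing maps are excluded by the convention, and this is why the classification is stated only up to $\Pi$.

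Next we compare the actions of the remaining generators on $1_{\bar0}$. Applying $\varphi$ to $L_m1_{\bar0}$ and $F_m1_{\bar0}$ and using the explicit formulas gives
\[
\lambda^m(x+m\beta(y))\,p=\mu^m(x+m\gamma(y))\,p,\qquad \lambda^m y\,p=\mu^m y\,p
\]
for all $m\in\Z$. Since $p\neq0$, taking $m=1$ in the second identity gives $\lambda=\mu$, and the first identity then gives $\beta=\gamma$. Applying $\varphi$ to $G_m1_{\bar0}$ gives
\[
\lambda^m d(y)\,q=\mu^m c(y)\,p,
\]
so $d=(p/q)\,c$ with $a=p/q\in\C^*$. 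For the converse, given $d=ac$, the map $\varphi(f1_{\bar0})=af1_{\bar0}'$, $\varphi(g1_{\bar1})=g1_{\bar1}'$ visibly intertwines all the operators in Proposition~\ref{prop:construction}(1). This proves (i).

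For (ii), the subspace $\mathcal L_{\bar1}M$ is an isomorphism invariant. It is nonzero in $\Omega^+(\lambda,\beta,d)$, because $G_01_{\bar0}=d(y)1_{\bar1}\neq0$, and it is zero in every split module. For (iii), the same unit argument shows that $\varphi$ is given by a nonzero scalar on each component. Comparing $L_m$ and $F_m$ on $1_{\bar0}$ and on $1_{\bar1}$ separately then yields $\lambda=\mu$, $\beta=\gamma$, $\widetilde\lambda=\widetilde\mu$ and $\widetilde\beta=\widetilde\gamma$; the converse is immediate. The final statement about the projective class $[d]$ is a restatement of (i).
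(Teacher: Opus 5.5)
Your proposal is correct and follows essentially the same route as the paper. An even isomorphism commutes with $L_0,F_0$, so it acts by nonzero scalars on the two free summands, and intertwining $F_1$, $L_1$ and the $G_m$ gives $\lambda=\mu$, $\beta=\gamma$, $d=ac$; part (ii) rests on the odd action being nonzero in $\Omega^+$ and zero in split modules. The differences are minor: you explicitly justify, via the units of $\C[x,y]$, why the scalars are nonzero constants (the paper leaves this implicit), and you prove (iii) by the same direct comparison instead of citing Theorem~\ref{tho2.1}(2).
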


\begin{proof}
Let \(\varphi\) be an even isomorphism between two modules of type \(\Omega^+\).
Since it commutes with \(L_0,F_0\), it is \(\C[x,y]\)-linear. Thus
\[
\varphi(1_{\bar0})=u 1'_{\bar0},\qquad
\varphi(1_{\bar1})=v 1'_{\bar1}
\]
for \(u,v\in\C^*\). Intertwining \(F_1\) gives \(\lambda=\mu\), and
intertwining \(L_1\) on \(1_{\bar0}\) gives \(\beta=\gamma\). Finally,
\(\varphi (G_01_{\bar0})=G_0\varphi(1_{\bar0})\) gives \(uc=vd\), hence \(d=(u/v)c\). Conversely,
these conditions define an isomorphism. Parity reversal proves (ii).

In type \(\Omega^+\), the common kernel \(\bigcap_m\ker G_m\) is the odd
summand. An even isomorphism
preserves its parity. In a split module every \(G_m\) vanishes, whereas the
odd action in \(\Omega^+\) is nonzero. This proves (iii). For split
modules, apply the uniqueness of rank-one modules in Theorem \ref{tho2.1} separately to
the two ordered homogeneous components.
\end{proof}

\begin{remark}
Although \(d\) and a nonzero scalar multiple of \(d\) give isomorphic modules,
a nonconstant \(d(y)\) cannot be normalized to \(1\), since the only units of
\(\C[x,y]\) are nonzero constants.
\end{remark}

\begin{remark}
The original two-sided odd ansatz cannot define a module: if \(a_m,b_m\) are
both nonzero, then
\(G_m^21_{\bar0}=a_m(x,y)b_m(x+m,y-1)1_{\bar0}\ne0\), contradicting
\([G_m,G_m]_+=2G_m^2=0\). This is the structural reason triangularity is
unavoidable.
\end{remark}

\section[
  Classification of U(h)-free modules of rank 1|1
]{Classification of \(U(\mathfrak{h})\)-free modules of super-rank \(1|1\)}
In this section, we determine a complete classification of \(U(\mathfrak{h})\)-free modules of super-rank \(1|1\) over the Heisenberg–Virasoro
type Lie superalgebra.

Let $M=M_{\bar 0}\oplus M_{\bar1}$ be an $\mathcal{L}$-module that is free of super-rank \(1|1\) as a $U(\mathfrak{h})$-module, with homogeneous basis elements $u$ and $v$. Clearly, $u$ and $v$ have distinct parities. Let $u=1_{\bar0}\in M_{\bar0}$ and $v=1_{\bar1}\in M_{\bar1}$. By the definition of $\mathcal{L}$, we have $L_0F_0=F_0L_0$. Hence,
$$M=U(\mathfrak{h}) 1_{\bar0}\oplus U(\mathfrak{h})1_{\bar1}=\C[L_0,F_0] 1_{\bar0}\oplus \C[L_0,F_0]1_{\bar1},$$
where $M_{\bar 0}=\C[L_0,F_0] 1_{\bar0}$ and $M_{\bar1}=\C[L_0,F_0]1_{\bar1}$.

We recall the following result from \cite{YYX3}.

\begin{lemma}\label{le311}
Let \(\mathcal{G}=\mathcal{G}_{\bar0}\oplus\mathcal{G}_{\bar1}\) be a Lie superalgebra. Let \(\mathfrak{h}\) be a degree-0 subalgebra of \(\mathcal{G}\) with
\(\mathfrak{h}\subseteq \mathcal{G}_{\bar0}\), and \([\mathcal{G}_{\bar1},\mathcal{G}_{\bar1}]=\mathcal{G}_{\bar0}\). Then there do not exist \(\mathcal{G}\)-modules which are free of rank one as
\(U(\mathfrak{h})\)-modules.
\end{lemma}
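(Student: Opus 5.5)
The plan is to show that any module free of rank one over \(U(\mathfrak h)\) is concentrated in a single parity. Then \(\mathcal G_{\bar1}\) acts by zero, and the hypothesis \([\mathcal G_{\bar1},\mathcal G_{\bar1}]=\mathcal G_{\bar0}\) forces all of \(\mathcal G_{\bar0}\), and in particular \(\mathfrak h\), to act by zero. This contradicts freeness. Throughout I assume \(\mathfrak h\neq0\); otherwise \(U(\mathfrak h)=\C\) and a rank-one module would be one-dimensional with trivial action, which is excluded by the standing convention that modules are nontrivial.

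\emph{Step 1: one parity only.} Suppose \(M\) is a \(\mathcal G\)-module with \(M=U(\mathfrak h)w\) free of rank one. Write \(w=w_{\bar0}+w_{\bar1}\). Since \(\mathfrak h\subseteq\mathcal G_{\bar0}\), the algebra \(U(\mathfrak h)\) preserves parity. Hence
\[
M_{\bar0}=U(\mathfrak h)w_{\bar0},\qquad M_{\bar1}=U(\mathfrak h)w_{\bar1},
\]
and \(M=M_{\bar0}\oplus M_{\bar1}\) as \(U(\mathfrak h)\)-modules. Via \(M\cong U(\mathfrak h)\), this becomes a decomposition of \(U(\mathfrak h)\) into a direct sum of two left ideals. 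Now \(U(\mathfrak h)\) is a domain, so any two nonzero left ideals \(I,J\) meet nontrivially: for nonzero \(a\in I\), \(b\in J\), the element \(ba\in I\) and the element \(ab\in J\), and in the commutative case relevant here \(ab=ba\neq0\). For a general degree-zero \(\mathfrak h\), I would instead use the Ore property of the Noetherian domain \(U(\mathfrak h)\). Hence one summand vanishes, and \(M=M_{\bar i}\) for a single \(\bar i\).

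\emph{Step 2: the odd part acts trivially.} For \(a\in\mathcal G_{\bar1}\) we have \(aM_{\bar i}\subseteq M_{\bar i+\bar1}=0\), so \(a\) acts by zero. Given \(a,b\in\mathcal G_{\bar1}\) and \(m\in M\), the module axiom gives
\[
[a,b]m=a(bm)+b(am)=0 .
\]
Since \([\mathcal G_{\bar1},\mathcal G_{\bar1}]=\mathcal G_{\bar0}\), every element of \(\mathcal G_{\bar0}\) acts by zero.

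\emph{Step 3: contradiction.} Since \(\mathfrak h\subseteq\mathcal G_{\bar0}\), every \(h\in\mathfrak h\) acts by zero. Pick \(0\neq h\in\mathfrak h\) and the free generator \(w\). Then \(hw=0\), while freeness requires \(hw\neq0\), because \(h\) is a nonzero element of \(U(\mathfrak h)\). So no such module exists.

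The only step with any content is Step 1: the indecomposability of the free rank-one module over the domain \(U(\mathfrak h)\). In the setting of this paper \(U(\mathfrak h)=\C[x,y]\) is commutative, so the one-line argument above suffices. If the definition of rank one already requires a homogeneous generator, Step 1 is immediate.
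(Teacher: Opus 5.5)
Your proof is correct, and it is the natural argument. The paper gives no proof of this lemma; it only quotes it from \cite{YYX3}, so there is nothing to compare route against. Your chain is sound: a rank-one free module is concentrated in one parity, so $\mathcal G_{\bar1}$ acts by zero. Then $\mathcal G_{\bar0}=[\mathcal G_{\bar1},\mathcal G_{\bar1}]\supseteq\mathfrak h$ also acts by zero, which contradicts freeness.

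One refinement would make Step~1 hold in full generality. You do not need the Ore property, which you only have when $U(\mathfrak h)$ is Noetherian (e.g.\ $\dim\mathfrak h<\infty$); it fails for the enveloping algebra of a free Lie algebra. Instead, suppose $U(\mathfrak h)=I\oplus J$ with $1=e+f$, $e\in I$, $f\in J$. If $I\neq0$, pick $0\neq a\in I$. Then $af=a-ae\in I\cap J=0$. Since $U(\mathfrak h)$ is a domain for every $\mathfrak h$ (by PBW), this forces $f=0$, so $1=e\in I$ and $J=0$.

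Your caveat about $\mathfrak h=0$ is also right. In that case the one-dimensional trivial module would be a counterexample, unless one invokes the paper's standing convention that modules are nontrivial.
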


In our situation, \(\mathcal L\) has a degree-0 subalgebra
\[
\mathfrak{h}=\mathbb{C}L_0\oplus\mathbb{C}F_0\subseteq\mathcal{L}_{\bar0}.
\]
However, the odd part of \(\mathcal L\) is abelian, so Lemma \ref{le311} does not apply directly. Nevertheless, since \([G_m,G_n]_+=0\), the odd action is necessarily triangular, and  a rank-one free module would force the odd ideal to act trivially and reduce to the even case. Hence the first nontrivial case is super-rank \(1|1\), which we classify below.
For simplicity, we set
$
x=L_0,  y=F_0.
$

We now determine the general form of the operators representing the generators of \(\mathcal{L}\). Since \(L_m\) and \(F_m\) are even, they preserve the \(\mathbb Z_2\)-grading of \(M\). Hence, their representations are diagonal:
\[
\rho(L_m)=
\begin{pmatrix}
A_m(x,y)S_m & 0\\
0 & \widetilde A_m(x,y)S_m
\end{pmatrix},
\]
and
\[
\rho(F_m)=
\begin{pmatrix}
B_m(x,y)S_m & 0\\
0 & \widetilde B_m(x,y)S_m
\end{pmatrix}.
\]
Since \(G_m\) is odd, it reverses parity; therefore, its representation is off-diagonal. By Lemma \ref{lem:shift}, we set
\[
\rho(G_m)=
\begin{pmatrix}
0 &  C_m(x,y)S_mT\\
\widetilde C_m(x,y)S_mT & 0
\end{pmatrix}.
\]
Here,
\[
A_m,\widetilde A_m,B_m,\widetilde B_m,
C_m,\widetilde{C}_m
\in\mathbb C[x,y].
\]

From Theorem \ref{tho2.1}, for any   \(\lambda,\widetilde{ \lambda}\in\mathbb{C}^{*}\) and \(\beta,\widetilde{\beta}\in\mathbb{C}[y]\), we have
\begin{align*}
A_m(x,y)=&\lambda^m(x+m\beta(y)), \\
\widetilde{A}_m(x,y) =& \widetilde{\lambda}^m(x+m\widetilde{\beta}(y)), \\
B_m(x,y)=&\lambda^{m}y, \
\widetilde{B}_m(x,y)  = \widetilde{ \lambda}^{m}y, 
\end{align*}
where \(m\in\mathbb{Z}\).

 We can write
\begin{equation}\label{eq:ab}G_m1_{\bar0}=\widetilde{C}_m(x,y) S_mT  1_{\bar1},\qquad
G_m 1_{\bar1}=C_m(x,y)S_mT 1_{\bar0}.
\end{equation}
By the shift lemma, \eqref{eq:ab} determines the action of \(G_m\) everywhere.

\begin{lemma}\label{lem:oneindex}
For every \(m\), either \(\widetilde{C}_m(x,y)=0\) or \(C_m(x,y)=0\).
\end{lemma}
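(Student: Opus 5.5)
The plan is to use the relation \([G_m,G_m]_+=2G_m^2=0\) from \eqref{eq:def3}, which in any \(\mathcal L\)-module gives \(G_m^2=0\) on \(M\). Apply this to the even generator \(1_{\bar0}\). By \eqref{eq:ab} and the shift rule of Lemma~\ref{lem:shift}, we compute
\[
G_m^2 1_{\bar0}=G_m\bigl(\widetilde C_m(x,y)1_{\bar1}\bigr)
=\widetilde C_m(x+m,y-1)\,G_m1_{\bar1}
=\widetilde C_m(x+m,y-1)\,C_m(x,y)\,1_{\bar0}.
\]
Here we read \(S_mT1_{\bar1}\) as \(1_{\bar1}\), since the shifts act on the polynomial coefficient, which is the constant \(1\). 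Equivalently, at the operator level, Lemma~\ref{lem:4.2} gives
\[
\rho(G_m)^2=\operatorname{diag}\bigl(C_m\,\widetilde C_m(x+m,y-1),\ \widetilde C_m\,C_m(x+m,y-1)\bigr)S_{2m}T^2 .
\]

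Next, use freeness. Since \(M_{\bar0}=\C[x,y]1_{\bar0}\) is free over \(\C[x,y]\), the vanishing \(G_m^21_{\bar0}=0\) forces the polynomial identity
\[
C_m(x,y)\,\widetilde C_m(x+m,y-1)=0 \quad\text{in } \C[x,y].
\]
The ring \(\C[x,y]\) is an integral domain, so one of the two factors is zero. The substitution \(f(x,y)\mapsto f(x+m,y-1)\) is a ring automorphism of \(\C[x,y]\), so \(\widetilde C_m(x+m,y-1)=0\) holds exactly when \(\widetilde C_m=0\). Hence either \(C_m=0\) or \(\widetilde C_m=0\). The relation on \(1_{\bar1}\) gives the symmetric identity and is not needed.

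No step here is a genuine obstacle. The only care needed is bookkeeping: applying the shift to the coefficient correctly (\(G_m f=f(x+m,y-1)G_m\)), and noting that the shifted polynomial vanishes only if the original does. This is the same computation as in the remark on the two-sided odd ansatz, now made rigorous for each fixed \(m\).
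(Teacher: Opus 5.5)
Your proposal is correct and is essentially the paper's argument: both use $\rho(G_m)^2=0$ to get $C_m(x,y)\widetilde C_m(x+m,y-1)=0$, then conclude because $\C[x,y]$ is a domain and the shift $f(x,y)\mapsto f(x+m,y-1)$ is injective. The paper's extra computation with $[\rho(F_m),\rho(G_k)]=\rho(G_{m+k})$ is not needed for this per-index statement, so your shorter version is already complete.
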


\begin{proof}
Fix \(k\in\mathbb{Z}\). Since \(\rho(G_k)^2=\begin{pmatrix}
0 &0\\
0 & 0
\end{pmatrix}\), we obtain
\[
\widetilde{C}_k(x,y)C_k(x+k,y-1)=
C_k(x,y)\widetilde{C}_k(x+k,y-1)=0.
\]
Therefore, either \(\widetilde{C}_k(x,y)=0\) or \(C_k(x+k,y-1)=0\). Since \(C_k(x,y)\) is a polynomial, it follows that either \(\widetilde{C}_k(x,y)=0\) or \(C_k(x,y)=0\). Moreover, since \([\rho(F_m),\rho(G_k)]=\rho(G_{m+k})\), we obtain
 \begin{align*}
&\begin{pmatrix}
0 & B_m(x,y)C_k(x+m,y)S_{m+k}T\\
\widetilde{B}_m(x,y)\widetilde{C}_k(x+m,y)S_{m+k}T & 0
\end{pmatrix}
\\&-\begin{pmatrix}
0 & C_k(x,y)\widetilde{B}_m(x+k,y-1)S_{m+k}T\\
\widetilde{C}_k(x,y)B_m(x+k,y-1)S_{m+k}T & 0
\end{pmatrix}
\\=&\begin{pmatrix}
0 & C_{m+k}(x,y)S_{m+k}T\\
\widetilde{C}_{m+k}(x,y)S_{m+k}T & 0
\end{pmatrix}.
 \end{align*}
Thus,
\[
B_m(x,y)C_k(x+m,y)-C_k(x,y)\widetilde{B}_m(x+k,y-1)=C_{m+k}(x,y)
\]
and
\[
\widetilde{B}_m(x,y)\widetilde{C}_k(x+m,y)-
\widetilde{C}_k(x,y)B_m(x+k,y-1)
=\widetilde{C}_{m+k}(x,y).
\]
Hence, for every \(m\in\mathbb{Z}\), either \(\widetilde{C}_m(x,y)=0\) or \(C_m(x,y)=0\).
\end{proof}

\begin{remark}
\label{lem:global-orientation} We have 
  \(G_m1_{\bar0}=G_m1_{\bar1}=0\) for all \(m\), or
  \(G_m1_{\bar1}=0\) for all \(m\) and \(G_m1_{\bar0}\neq0\) for all \(m\), or \(G_m1_{\bar0}=0\) for all \(m\) and \(G_m1_{\bar1}\neq0\) for all \(m\).
\end{remark}

 The case \(\widetilde{C}_m(x,y)=C_m(x,y)=0\) is immediate. We now consider the case \(\widetilde{C}_m(x,y)\neq0\) and \(C_m(x,y)=0\). The case \(C_m(x,y)\neq0\) and \(\widetilde{C}_m(x,y)=0\) follows by reversing the parity. Hence, for any $m\in\mathbb{Z}$, we get 
\[
\rho(G_m)=
\begin{pmatrix}
0 & 0\\
\widetilde C_m(x,y)S_mT & 0
\end{pmatrix}.
\]
\begin{lemma}\label{lem:solveF}
Under the above assumptions, we have
\[
\lambda=\widetilde{\lambda},
\qquad \widetilde{C}_m(x,y)=\lambda^m d(y), \qquad
\widetilde{\beta}(y)=\beta(y-1)+1,
\]
for some fixed non-zero polynomial \(d(y)\in\C[y]\).
\end{lemma}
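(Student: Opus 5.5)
We will extract functional equations for \(\widetilde C_m\) from the brackets of \(\rho(G_k)\) with \(\rho(F_m)\) and \(\rho(L_m)\), using Lemma \ref{lem:4.2}. By Remark \ref{lem:global-orientation}, in the present case \(\widetilde C_m\neq0\) for every \(m\).

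\textbf{The \(F\)-bracket.} The second identity displayed in the proof of Lemma \ref{lem:oneindex}, with \(B_m=\lambda^m y\) and \(\widetilde B_m=\widetilde\lambda^m y\), reads
\[
\widetilde\lambda^m y\,\widetilde C_k(x+m,y)-\lambda^m (y-1)\widetilde C_k(x,y)=\widetilde C_{m+k}(x,y).
\]
Putting \(m=0\) gives \(y\widetilde C_k-(y-1)\widetilde C_k=\widetilde C_k\), which carries no information. Putting \(k=0\) and writing \(c:=\widetilde C_0\neq0\) gives
\[
\widetilde C_m(x,y)=\widetilde\lambda^m y\,c(x+m,y)-\lambda^m(y-1)c(x,y),
\]
so every \(\widetilde C_m\) is determined by \(c\).

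\textbf{The \(L\)-bracket.} The relation \([\rho(L_m),\rho(G_k)]=(m-k)\rho(G_{m+k})\) gives
\[
\widetilde\lambda^m\bigl(x+m\widetilde\beta(y)\bigr)\widetilde C_k(x+m,y)-\widetilde C_k(x,y)\lambda^m\bigl(x+k+m\beta(y-1)\bigr)=(m-k)\widetilde C_{m+k}(x,y).
\]
Taking \(k=0\) yields
\[
\widetilde\lambda^m\bigl(x+m\widetilde\beta\bigr)c(x+m,y)-\lambda^m\bigl(x+m\beta(y-1)\bigr)c(x,y)=m\widetilde C_m.
\]
Substituting the \(F\)-expression for \(\widetilde C_m\), we get for all \(m\) the identity
\[
\widetilde\lambda^m\bigl(x+m\widetilde\beta-my\bigr)c(x+m,y)=\lambda^m\bigl(x+m\beta(y-1)-m(y-1)\bigr)c(x,y).
\]

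\textbf{Extracting the parameters.} Let \(c\) have degree \(n\) in \(x\) with leading coefficient \(c_n(y)\neq0\). Comparing the top \(x\)-degree (\(n+1\)) coefficients gives \(\widetilde\lambda^m=\lambda^m\) for all \(m\), so \(\widetilde\lambda=\lambda\) (take \(m=1\)). Dividing by \(\lambda^m\) and comparing the \(x^n\) coefficients gives
\[
m\bigl(\widetilde\beta-y\bigr)c_n+mn\,c_n+c_{n-1}=m\bigl(\beta(y-1)-y+1\bigr)c_n+c_{n-1}.
\]
Hence \(\widetilde\beta(y)+n=\beta(y-1)+1\).

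To force \(n=0\), which is the main obstacle, I argue as follows. The identity says that
\[
\frac{c(x+m,y)}{c(x,y)}=\frac{x+m(\beta(y-1)-y+1)}{x+m(\widetilde\beta-y)}
\]
for all \(m\). Over \(\mathbb{C}(y)\), pick a root \(r\) of \(c\) in \(x\). Then \(r-m\) must be a root of \(c\) for every \(m\) with \(r\neq -m(\widetilde\beta-y)\) (up to cancellation), and any such exceptional \(m\) is unique. So infinitely many shifts of \(r\) are roots of \(c\), which is impossible. Therefore \(c\) has no roots in \(x\), i.e.\ \(c=d(y)\neq0\) and \(n=0\), giving \(\widetilde\beta(y)=\beta(y-1)+1\). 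An alternative is a degree count in \(m\) after expanding \(c(x+m,y)\).

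\textbf{Conclusion.} With \(c=d(y)\), the \(F\)-formula gives
\[
\widetilde C_m=\lambda^m\bigl(y-(y-1)\bigr)d=\lambda^m d(y),
\]
as claimed. Finally, we check that the general \(L\)- and \(F\)-relations for arbitrary \(k\) hold, which is exactly Proposition \ref{prop:construction}(1).
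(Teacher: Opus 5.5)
\textbf{Gap: the step forcing $n=0$.} It fails, and it cannot be repaired. Put $\gamma(y)=\widetilde\beta(y)-y$. You correctly derive $\widetilde\lambda=\lambda$ and $\widetilde\beta+n=\beta(y-1)+1$; your top-coefficient argument for $\widetilde\lambda=\lambda$ is sounder than the paper's, which treats $\widetilde C_n$ as independent of $n$. With these, your identity becomes $(x+m\gamma)\,c(x+m,y)=(x+m\gamma+mn)\,c(x,y)$. Substituting a root $x=r$ gives $(r+m\gamma)\,c(r+m,y)=0$, so $r+m$ is forced to be a root unless $r=-m\gamma$. This exceptional $m$ is unique only when $\gamma\neq0$. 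If $\widetilde\beta(y)=y$ and $r=0$, every $m$ is exceptional and nothing follows.

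What your argument actually shows is this. If $\widetilde\beta\neq y$, then $n=0$. If $\widetilde\beta=y$, all roots of $c$ are $0$, so $c=c_n(y)x^n$, and $x(x+m)^n=(x+mn)x^n$ forces only $n\le 1$. The case $n=1$ survives, and your alternative degree count in $m$ cannot exclude it, since $x(x+m)=(x+m)x$ identically.

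\textbf{This case is a genuine counterexample to the lemma.} Take $\widetilde\lambda=\lambda$, $\beta(y)=y+1$, $\widetilde\beta(y)=y$, $C_m=0$, and $\widetilde C_m=\lambda^m d(y)(x+my)$ with $0\neq d\in\C[y]$. Then for all $m,k$,
\begin{align*}
\lambda^m y\,\widetilde C_k(x+m,y)-\lambda^m(y-1)\,\widetilde C_k(x,y)&=\lambda^{m+k}d(y)\bigl(x+(m+k)y\bigr)=\widetilde C_{m+k}(x,y),\\
\lambda^m(x+my)\,\widetilde C_k(x+m,y)-\lambda^m(x+k+my)\,\widetilde C_k(x,y)&=(m-k)\lambda^{m+k}d(y)\bigl(x+(m+k)y\bigr).
\end{align*}
So the $F$- and $L$-relations hold. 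The even relations hold because both diagonal blocks are rank-one modules $\Psi_{\mathcal L_{\bar0}}(\lambda,\cdot)$, and $\rho(G_m)\rho(G_k)=0$ by triangularity.

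The generators $1_{\bar0},1_{\bar1}$ are determined up to nonzero scalars. Hence neither the $x$-dependence of $\widetilde C_m$ nor the mismatch $\widetilde\beta=y\neq y+1=\beta(y-1)+1$ can be removed. The statement is therefore false as written, and this module is not isomorphic to any $\Omega^+$-module, even up to parity reversal.

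\textbf{The paper's proof has the same hole.} Its comparison of the coefficients of $n^k\widetilde\lambda^{-n}$ in \eqref{eq:key} is invalid, because the coefficients of $\widetilde C_n$ themselves depend on $n$. Indeed, the example above satisfies \eqref{eq:key} with $\widetilde C_n$ of $x$-degree $1$.

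\textbf{What your method correctly yields} is a dichotomy. Either $\widetilde C_m=\lambda^m d(y)$ with $\widetilde\beta=\beta(y-1)+1$, or $\beta=y+1$, $\widetilde\beta=y$ and $\widetilde C_m=\lambda^m d(y)(x+my)$.
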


\begin{proof}
For any \(m,n\in\mathbb{Z}\), Lemma \ref{lem:oneindex}, together with \([\rho(F_m),\rho(G_n)]=\rho(G_{m+n})\) and the shift lemma, yields
\begin{equation}\label{eq:Ffunctional}
\widetilde{\lambda}^m y\,\widetilde{C}_n(x+m,y)
-\lambda^m(y-1)\widetilde{C}_n(x,y)=\widetilde{C}_{m+n}(x,y).
\end{equation}
Setting \(m=-n\) in \eqref{eq:Ffunctional}, we obtain
\begin{equation}\label{eq:key}
\widetilde{\lambda}^{-n} y \widetilde{C}_n(x-n,y)
-\lambda^{-n}(y-1)\widetilde{C}_n(x,y)
=\widetilde{C}_0(x,y).
\end{equation}
Write
\[
\widetilde{C}_n(x,y)=\sum_{i=0}^{k}\alpha_{i,n}(y)x^i,
\qquad \alpha_{k,n}(y)\neq 0,
\]
where \(k\ge 0\). Expanding \(\widetilde{C}_n(x-n,y)\), its highest power in \(x\) contributes a term involving
\(n^k\widetilde{\lambda}^{-n}\). Since neither the second term on the left-hand side of \eqref{eq:key} nor the right-hand side contains such a term, comparison of the coefficients of \(n^k\widetilde{\lambda}^{-n}\) forces \(k=0\). Hence
\[
\widetilde{C}_n(x,y)=\widetilde{C}_n(y)
\]
is independent of \(x\).

Equation \eqref{eq:key} now becomes
\[
\widetilde{\lambda}^{-n} y \widetilde{C}_n(y)
-\lambda^{-n}(y-1)\widetilde{C}_n(y)
=\widetilde{C}_0(y).
\]
If \(\widetilde{\lambda}\neq\lambda\), then the two exponential functions
\(\widetilde{\lambda}^{-n}\) and \(\lambda^{-n}\) are linearly independent, whereas the right-hand side is independent of \(n\), a contradiction. Thus
$
\widetilde{\lambda}=\lambda.
$
Consequently,
$
\lambda^{-n}\widetilde{C}_n(y)=\widetilde{C}_0(y),
$
and therefore
$
\widetilde{C}_n(y)=\lambda^n d(y),
 d(y):=\widetilde{C}_0(y)\neq 0.
$
Hence
\[
\widetilde{C}_n(x,y)=\lambda^n d(y).
\]

For any \(m,n\in\mathbb{Z}\), applying \([\rho(L_m),\rho(G_n)]=(m-n)\rho(G_{m+n})\) and the shift lemma yields
\[
\lambda^{m+n}d(y)(x+m\widetilde{\beta}(y))
-\lambda^{m+n}d(y)(x+n+m{\beta}(y-1))=(m-n)\lambda^{m+n}d(y).
\]
Setting \(m=n\) in the above equation, we obtain
\[
d(y)(\widetilde{\beta}(y)-\beta(y-1)-1)=0.
\]
This completes the proof.
\end{proof}
\begin{lemma}\label{lem:solveL}
The remaining relations are satisfied automatically by the formulas in Proposition \ref{prop:construction}.
\end{lemma}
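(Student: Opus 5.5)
Having determined $\lambda=\widetilde\lambda$, $\widetilde\beta(y)=\beta(y-1)+1$ and $\widetilde C_m=\lambda^m d(y)$ in Lemma~\ref{lem:solveF}, the plan is to list every defining relation of $\mathcal L$ and check that each one is either already used or reduces to an identity verified in the proof of Proposition~\ref{prop:construction}(1). The brackets are $[L,L]$, $[L,F]$, $[F,F]$, $[L,G]$, $[F,G]$, $[G,G]_+$, and the central relations.

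The even brackets come first. The relations $[L_m,L_n]$, $[L_m,F_n]$ and $[F_m,F_n]$ involve only the diagonal entries. On $M_{\bar0}$ these give the module $\Psi_{\mathcal L_{\bar0}}(\lambda,\beta)$, and on $M_{\bar1}$ they give $\Psi_{\mathcal L_{\bar0}}(\lambda,\beta^\sharp)$. By Theorem~\ref{tho2.1}(1) both are $\mathcal L_{\bar0}$-modules with all $C_i$ acting by $0$. The only thing to record is that the affine relation $mX_m-nX_n=(m-n)X_{m+n}$ holds for $X_m=x+m\beta^\sharp(y)$, exactly as for $P$ and $Q$ in Proposition~\ref{prop:construction}. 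This covers the central terms too: since $\rho(C_i)=0$, we need $C_i$ to act by zero, which forces the central cocycle terms to vanish. That is consistent with Theorem~\ref{tho2.1}(3).

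The odd relations come next. Since $\rho(G_m)$ is strictly lower triangular for every $m$, the product $\rho(G_m)\rho(G_n)$ is zero, which gives $[G_m,G_n]_+=0$ at once. The relation $[C_i,G_m]=0$ is trivial.

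The remaining work is to re-check the two mixed relations for \emph{all} $m,n$, since Lemma~\ref{lem:solveF} only used special cases. For $[F_m,G_n]$, we substitute $\widetilde C_n=\lambda^n d(y)$ into \eqref{eq:Ffunctional}. This gives $\lambda^{m+n}\bigl(y-(y-1)\bigr)d(y)=\lambda^{m+n}d(y)$, which holds identically. For $[L_m,G_n]$, we use Lemma~\ref{lem:4.2}. The difference of the two products is
\[
\lambda^{m+n}d(y)\bigl[(x+m\beta^\sharp(y))-(x+n+m\beta(y-1))\bigr]S_{m+n}T,
\]
and with $\beta^\sharp(y)=\beta(y-1)+1$ this equals $(m-n)\lambda^{m+n}d(y)S_{m+n}T$ for all $m,n$, not only for $m=n$.

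The main point needing care is this last step, because the lemma derived $\widetilde\beta$ from the case $m=n$ only. I would double-check the order of the shifts there: $T$ acts after $S_n$, so the coefficient $\widetilde A_m$ is evaluated unshifted, while $A_m$ is evaluated at $(x+n,y-1)$. With this checked, every relation holds, and the resulting module is exactly $\Omega^+(\lambda,\beta,d)$.
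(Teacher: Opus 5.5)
Your proposal is correct and follows the same route as the paper, whose proof is only the one-line assertion of direct verification via Lemma~\ref{lem:4.2} and the rank-one formulas \eqref{eq2.22}. You carry that verification out explicitly, including the check of $[L_m,G_n]=(m-n)G_{m+n}$ for all $m,n$ rather than only $m=n$; the split formulas of Proposition~\ref{prop:construction}(2), which you do not mention, hold trivially, since $\rho(G_m)=0$ there and each diagonal block is a module $\Psi_{\mathcal{L}_{\bar0}}(\cdot,\cdot)$ by Theorem~\ref{tho2.1}(1).
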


\begin{proof}
This follows by direct verification using Lemma \ref{lem:4.2} and \eqref{eq2.22}.
\end{proof}

 The $\mathcal{L}$-module $M$ may be regarded as an $\mathcal{L}_{\bar0}$-module. By Lemmas \ref{lem:shift}, \ref{lem:oneindex}, \ref{lem:solveF}, and \ref{lem:solveL}, we obtain the main results of this paper as follows.
\begin{theorem}\label{thm:main}
Let \(M\) be an \(\mathcal L\)-module whose restriction to a
\(U(\mathfrak h)\)-module is free of super-rank \(1|1\), where
\(\mathfrak h=\mathbb C L_0\oplus\mathbb C F_0\). Then, up to parity
reversal, the following holds.
\begin{itemize}
\item{\rm(1)} If the odd ideal
\(\mathcal L_{\bar 1}=\bigoplus_{m\in\mathbb Z}\mathbb C G_m\)
acts nontrivially,  there exist \(\lambda\in\mathbb C^*\),
\(\beta\in\mathbb C[y]\), and \(0\neq d\in\mathbb C[y]\) such that

  $$M\cong
 \Omega^+(\lambda,\beta,d)
 $$ as \(\mathbb Z_2\)-graded \(\mathcal L\)-modules.
 
\item{\rm(2)} If the odd ideal acts trivially, i.e.,
\(\mathcal L_{\bar 1}M=0\), then there exist
\(\lambda,\widetilde\lambda\in\mathbb C^*\) and
\(\beta,\widetilde\beta\in\mathbb C[y]\) such that $$M \cong
\Omega^0(\lambda,\beta;\widetilde\lambda,\widetilde\beta).$$
 
In this case, the two homogeneous summands are independent rank-one \(\mathcal L_{\bar 0}\)-modules.
\end{itemize}
\end{theorem}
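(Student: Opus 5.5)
The plan is to assemble the classification from the lemmas already established, treating $M$ first as an $\mathcal L_{\bar0}$-module and then solving for the odd action.

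\emph{Step 1: the even action.} Since $L_m,F_m,C_i$ are even, each homogeneous summand $M_{\bar0}=\C[x,y]1_{\bar0}$ and $M_{\bar1}=\C[x,y]1_{\bar1}$ is an $\mathcal L_{\bar0}$-submodule which is $U(\h)$-free of rank one. By Theorem~\ref{tho2.1}(3), there are $\lambda,\widetilde\lambda\in\C^*$ and $\beta,\widetilde\beta\in\C[y]$ such that the actions on the two summands are those of $\Psi_{\mathcal L_{\bar0}}(\lambda,\beta)$ and $\Psi_{\mathcal L_{\bar0}}(\widetilde\lambda,\widetilde\beta)$. Composing with $U(\h)$-linear rescalings of $1_{\bar0},1_{\bar1}$, this gives the diagonal forms of $\rho(L_m),\rho(F_m)$ with $A_m,\widetilde A_m,B_m,\widetilde B_m$ as displayed, and $C_i$ acting by zero. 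One point needs care: a priori $C_i$ acts by scalars, not necessarily zero. I would check that the rank-one classification in \cite{HCS} forces the central charges to vanish. Alternatively, I would use $[L_m,F_{-m}]$ evaluated on a polynomial and compare $x$-degrees.

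\emph{Step 2: the odd action and the case split.} By Lemma~\ref{lem:shift}, $\rho(G_m)$ is off-diagonal with entries $C_m S_mT$ and $\widetilde C_m S_mT$. Lemma~\ref{lem:oneindex} and Remark~\ref{lem:global-orientation} give three global options. The main obstacle is to make the passage from ``for each $m$'' to ``uniformly in $m$'' rigorous. My approach is to use the relation $[F_m,G_k]=G_{m+k}$: if $\widetilde C_k\ne0$ for some $k$, the relation expresses $\widetilde C_{m+k}$ through $\widetilde C_k$. Combined with the $x$-degree argument of Lemma~\ref{lem:solveF}, this shows $\widetilde C_{m+k}=\lambda^{m+k}d(y)$, which is nonzero for every $m$. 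Then $C_n=0$ for all $n$ by Lemma~\ref{lem:oneindex}. I would run the degree argument of Lemma~\ref{lem:solveF} starting from such a $k$ rather than from $0$.

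If all $C_m=\widetilde C_m=0$, then $\mathcal L_{\bar1}M=0$, and Step 1 directly gives $M\cong\Omega^0(\lambda,\beta;\widetilde\lambda,\widetilde\beta)$. This proves (2), with the independence of the two summands coming from Theorem~\ref{tho2.1}.

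\emph{Step 3: the nontrivial case.} Up to applying $\Pi$, assume $C_m=0$ and $\widetilde C_m\ne0$. Lemma~\ref{lem:solveF} gives $\widetilde\lambda=\lambda$, $\widetilde C_m=\lambda^m d(y)$ with $d\ne0$, and $\widetilde\beta(y)=\beta(y-1)+1$. Lemma~\ref{lem:solveL} confirms that the remaining brackets impose nothing more. The matrices then coincide with those of Proposition~\ref{prop:construction}(1). Hence the $U(\h)$-linear map sending $1_{\bar0},1_{\bar1}$ to the standard generators is an even $\mathcal L$-isomorphism $M\cong\Omega^+(\lambda,\beta,d)$, which proves (1).
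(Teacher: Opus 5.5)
\textbf{There is a genuine gap, and it comes from the step you take over from Lemma~\ref{lem:solveF}.} Your Steps 2 and 3 both rely on the claim that the relations $[F_m,G_n]=G_{m+n}$ force $\widetilde C_n$ to be independent of $x$. That claim is false. Take $\widetilde\lambda=\lambda$ and arbitrary $a,b\in\C[y]$, and set $\widetilde C_n=\lambda^n\bigl(a(y)+b(y)(x+ny)\bigr)$. These polynomials satisfy \eqref{eq:Ffunctional}, because
$y\bigl(a+b(x+m+ny)\bigr)-(y-1)\bigl(a+b(x+ny)\bigr)=a+b\bigl(x+(m+n)y\bigr)$.
The degree comparison in Lemma~\ref{lem:solveF} is not legitimate. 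The coefficients $\alpha_{i,n}(y)$ depend on $n$ in an unknown way, so there is no well-defined coefficient of $n^k\widetilde\lambda^{-n}$ to compare. What \eqref{eq:Ffunctional} actually gives, after composing it with itself, is
$y(y-1)(\widetilde\lambda^mS_m-\lambda^m)(\widetilde\lambda^{m'}S_{m'}-\lambda^{m'})\widetilde C_k=0$ for all $m,m'$. Hence $\widetilde\lambda=\lambda$ and $\deg_x\widetilde C_k\le1$ whenever $\widetilde C_k\neq0$, which is exactly the family above. This does settle your uniformity concern: such $\widetilde C_n$ vanish for all $n$ or for none, so the first part of Lemma~\ref{lem:oneindex} makes the odd action globally one-sided.

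\textbf{The $L$-relations leave room for a second family.} Substituting this $\widetilde C_n$ into $[L_m,G_n]=(m-n)G_{m+n}$ gives three conditions:
$a\bigl(\widetilde\beta-\beta(y-1)-1\bigr)=0$, $b\bigl(\widetilde\beta-\beta(y-1)\bigr)=0$, and $b(\widetilde\beta-y)=0$.
The case $b=0$ is $\Omega^+(\lambda,\beta,a)$. But $b\neq0$ is also possible, with $a=0$, $\beta(y)=y+1$ and $\widetilde\beta(y)=y$. The even part $\Psi_{\mathcal L_{\bar0}}(\lambda,y+1)\oplus\Psi_{\mathcal L_{\bar0}}(\lambda,y)$ together with
\[
\rho(G_m)=\begin{pmatrix}0&0\\ \lambda^m b(y)(x+my)S_mT&0\end{pmatrix},\qquad \rho(C_i)=0,
\]
is an $\mathcal L$-module. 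For instance, $[L_m,G_n]$ reduces to $(x+my)(x+m+ny)-(x+ny)(x+n+my)=(m-n)\bigl(x+(m+n)y\bigr)$, and the remaining brackets are immediate.

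This module is not isomorphic to any $\Omega^+(\mu,\gamma,d)$. A graded isomorphism commutes with $L_0$ and $F_0$, so it is $\C[x,y]$-linear and rescales $1_{\bar0},1_{\bar1}$ by nonzero constants. Here $G_01_{\bar0}=b(y)\,x\,1_{\bar1}$, whereas in $\Omega^+$ one has $G_01_{\bar0}=d(y)1_{\bar1}$. Its parity reversal is excluded as well, since there every $G_m$ annihilates the even summand.

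So part (1) of the statement is false as written, and your argument cannot be completed. The paper's own proof derives the theorem from the same Lemma~\ref{lem:solveF} and has the same defect. A correct classification of the case with nontrivial odd action must include this second family.
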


\begin{remark}
The operadic viewpoint should be understood here as a structural
reformulation of the classification rather than as an independent
proof of it. The Lie superoperad determines the formal identities
that a representation must preserve, but it does not by itself
force the polynomial realization or determine the parameters
\(\lambda\), \(\beta\), and \(d\). These features arise from the
\(U(\mathfrak h)\)-free condition and from solving the resulting
polynomial shift equations.
\end{remark}

\section*{Acknowledgements}
This work is supported by the National Natural Science Foundation of China (No. 12671036).

\section*{Authors' contributions}
All authors contributed equally to this work.

\section*{Data Availability Statement}
This manuscript has no associated data.

\section*{Conflict of Interest}
The authors declare no conflict of interest related to this article.

\bigskip

Yan Kong

\vspace{2pt}
School of Science, Jimei University, Xiamen, Fujian 361021, China

\vspace{2pt}
kongyan616@163.com

\bigskip

Haibo Chen

\vspace{2pt}
School of Science, Jimei University, Xiamen, Fujian 361021, China

\vspace{2pt}
hypo1025@jmu.edu.cn

\bigskip

Yucai Su

\vspace{2pt}
School of Science, Jimei University, Xiamen, Fujian 361021, China

\vspace{2pt}
yucaisu@jmu.edu.cn
\end{document}